\documentclass[11pt]{article}

\usepackage{amsmath}
\usepackage{amssymb, yhmath, ulem}
\usepackage[ruled,noend,linesnumbered]{algorithm2e}
\usepackage{float}

\usepackage{enumerate}
\usepackage{listings}

\usepackage[pdftex]{graphicx}
\usepackage{epstopdf}
\usepackage{bm}
\usepackage{authblk}

\usepackage{color}
\definecolor{dkgreen}{rgb}{0,0.6,0}
\definecolor{gray}{rgb}{0.5,0.5,0.5}

\usepackage[bookmarks,pagebackref,pdfpagelabels=true]{hyperref}

\hypersetup{
    colorlinks=true,        
    linkcolor=red,          
    citecolor=green,        
    filecolor=magenta,      
    urlcolor=dkgreen           
}
\usepackage{kbordermatrix}

\textheight=8.1in
\topmargin= -.53 in
\textwidth=6.7in
\headheight=12.0 pt
\headsep=0.5in
\evensidemargin=-.2in
\oddsidemargin=-.2in
\parskip=.1 cm


\newcommand{\CURID}{{\sf{CUR-ID}}}
\newcommand{\CUR}{{\sf{CUR}}}

\newcommand{\ID}{{\sf{ID}}}
\newcommand{\QR}{{\sf{QR}}}
\newcommand{\SVD}{{\sf{SVD}}}


\newtheorem{theorem}{Theorem}[section]
\newtheorem{corollary}[theorem]{Corollary}
\newtheorem{lemma}[theorem]{Lemma}

\newtheorem{remark}[theorem]{Remark}

\newenvironment{proof}[1][Proof]{\begin{trivlist}\item[\hskip \labelsep {\bfseries #1.}]}{$\Box$\end{trivlist}}


\newcommand{\range}{\operatorname{range}}

\newcommand{\real}{\mathbb R}

\numberwithin{equation}{section}

\newcommand{\mtx}[1]{\bm{\mathsf{#1}}}
\newcommand{\vct}[1]{\bm{\mathsf{#1}}}

\title{Efficient Algorithms for CUR and Interpolative Matrix Decompositions}

\author[1]{Sergey Voronin}
\author[2]{Per-Gunnar Martinsson}
\affil[1]{Department of Mathematics, Tufts University, Medford, MA 02155, USA}
\affil[2]{Department of Applied Mathematics, University of Colorado, Boulder, CO 80309, USA}

\date{\today}

\begin{document}

\maketitle

\abstract{The manuscript describes efficient algorithms for the computation of the \CUR~and \ID~decompositions.
The methods used are based on simple modifications to the classical truncated pivoted
\QR~decomposition, which means that highly optimized library codes can be utilized for implementation.
For certain applications, further acceleration can be attained by incorporating techniques
based on randomized projections.
Numerical experiments demonstrate advantageous performance compared to existing techniques for computing \CUR~factorizations.}

\section{Introduction}

In many applications, it is useful to approximate a matrix $\mtx{A} \in \mathbb{C}^{m \times n}$
by a factorization of rank $k < \min(m,n)$.
When the singular values of $\mtx{A}$ decay sufficiently fast so that an accurate 
approximation can be obtained for a rank $k$ that is substantially smaller than 
either $m$ or $n$,
great savings can be obtained both in terms of storage requirements, and in terms
of speed of any computations involving $\mtx{A}$.
A low rank approximation that is in many ways optimal is the truncated
\textit{singular value decomposition (SVD)} of
rank $k$, which approximates $\mtx{A}$ via the product
\begin{equation}
\label{eq:SVD}
\begin{array}{ccccc}
\mtx{A} &\approx & \mtx{U}_k & \mtx{\Sigma}_k & \mtx{V}^*_k,\\
m\times n && m\times k & k\times k & k\times n
\end{array}
\end{equation}
where the columns of the orthonormal matrices $\mtx{U}_k$ and $\mtx{V}_k$ are the left and
right singular vectors of $\mtx{A}$, and where $\mtx{\Sigma}_k$ is a diagonal matrix holding
the singular values of $\mtx{A}$.
However, a disadvantage of the low rank SVD is its storage requirements. Even
if $\mtx{A}$ is a sparse matrix, $\mtx{U}_k$ and $\mtx{V}_k$ are usually dense.
This means that if $\mtx{A}$ is large and very sparse, compression via the SVD is only
efficient when the rank $k$ is \textit{much} smaller than $\min(m,n)$.

As an alternative to the SVD, the so called \textit{CUR-factorization} \cite{goreinov1997theory,tyrtyshnikov2000incomplete,MR2475795} has recently received much 
attention \cite{Mitrovic_curdecomposition, MR3121656}.
The \CUR-factorization approximates an $m\times n$ matrix $\mtx{A}$ as a product
\begin{equation}
\label{eq:CUR}
\begin{array}{ccccc}
\mtx{A} &\approx & \mtx{C} & \mtx{U} & \mtx{R},\\
m\times n && m\times k & k\times k & k\times n
\end{array}
\end{equation}
where $\mtx{C}$ contains a subset of the columns of $\mtx{A}$ and $\mtx{R}$ contains
a subset of the rows of $\mtx{A}$. The key advantage of the CUR is that
the factors $\mtx{C}$ and $\mtx{R}$ (which are typically much larger than $\mtx{U}$)
inherit properties such as \textit{sparsity} or \textit{non-negativity}
from $\mtx{A}$. Also, the index sets that point out which columns and rows of $\mtx{A}$
to include in $\mtx{C}$ and $\mtx{R}$ often assist in \textit{data interpretation.}
Numerous algorithms for computing the \CUR~factorization have been proposed
(see e.g. \cite{MR2443975,MR3121656}), with some of the most
recent and popular approaches relying on a method known as
leverage scores \cite{MR2443975,MR2475795}, a notion originating
from statistics \cite{HoaglinWelsch}.

A third factorization which is closely related to the \CUR~is
the so called \textit{interpolative decomposition (ID)}, which
decomposes $\mtx{A}$ as
\begin{equation}
\label{eq:CUR2}
\begin{array}{ccccc}
\mtx{A} &\approx & \mtx{C} & \mtx{V}^*,\\
m\times n && m\times k & k\times n
\end{array}
\end{equation}
where again $\mtx{C}$ consists of $k$ columns of $\mtx{A}$. The matrix $\mtx{V}$ contains a
$k\times k$ identity matrix as a submatrix and can be constructed so that
$\max_{i,j}|\mtx{V}(i,j)| \leq 1$, making $\mtx{V}$ fairly well-conditioned.
Of course, one could equally well express $\mtx{A}$ as
\begin{equation}
\label{eq:CUR2rows}
\begin{array}{ccccc}
\mtx{A} &\approx & \mtx{W} & \mtx{R},\\
m\times n && m\times k & k\times n
\end{array}
\end{equation}
where $\mtx{R}$ holds $k$ rows of $\mtx{A}$, and the properties of $\mtx{W}$ are analogous to those
of $\mtx{V}$.
A third variation of this idea is the
\textit{two-sided interpolative decomposition (tsID)},
which decomposes $\mtx{A}$ as the product
\begin{equation}
\label{eq:CUR2b}
\begin{array}{ccccc}
\mtx{A} &\approx & \mtx{W} & \mtx{A}_{\rm skel} & \mtx{V}^*,\\
m\times n && m\times k & k\times k & k\times n
\end{array}
\end{equation}
where $\mtx{A}_{\rm skel}$ consists of a $k\times k$ submatrix of $\mtx{A}$.
The two sided \ID~allows for data interpretation in a manner entirely analogous to the \CUR,
but has an advantage over the CUR in that it is inherently better conditioned,
cf.~Remark \ref{remark:condCUR}.
On the other hand, the factors $\mtx{W}$ and $\mtx{V}$ do not inherit properties such as sparsity or non-negativity.
This makes the two-sided ID only marginally better than the SVD in terms of
storage requirements for sparse matrices.

In this manuscript, we describe a set of efficient algorithms for computing approximate
\ID~and \CUR~factorizations. The algorithms are obtained via slight variations on
the classical ``rank-revealing QR'' factorizations \cite{MR882441} and are easy to
implement---the most expensive parts of the computation can be executed using highly
optimized standard libraries such as, e.g., LAPACK \cite{LAPACK}.
We also demonstrate how the computations can be accelerated by using
randomized algorithms \cite{MR2806637}. For instance, randomization allows us
to improve the asymptotic complexity of computing the CUR decomposition from $O(mnk)$
to $O(mn\,log(k) + (m+n)k^{2})$. Section \ref{sec:num}
illustrates via several numerical examples that the techniques described here for
computing the \CUR~factorization compare favorably in terms of both speed and accuracy
with recently proposed \CUR~implementations. All the \ID~and \CUR~factorization algorithms 
discussed in this article are efficiently 
implemented as part of the open source RSVDPACK package \cite{voronin2015rsvdpack}. 

%
%

\section{Preliminaries}
\label{sec:prel}
In this section we review some existing matrix decompositions,
notably the pivoted \QR~, \ID~, and \CUR~ decompositions \cite{MR2806637}.
We follow the notation of \cite{MR3024913} (the so called ``Matlab style notation''):
given any matrix $\mtx{A}$ and (ordered) subindex sets $I$ and $J$,
$\mtx{A}(I,J)$ denotes the submatrix
of $\mtx{A}$ obtained by extracting the rows and columns of $\mtx{A}$
indexed by $I$ and $J$, respectively;
and $\mtx{A}(:, J)$ denotes the submatrix of $\mtx{A}$
obtained by extracting the columns of $\mtx{A}$ indexed by $J$.
For any positive integer $k$, $1:k$ denotes the ordered index set $(1,\ldots,k)$.
We take $\| \cdot \|$ to be the spectral or operator norm (largest singular value) and
$\| \cdot \|_{F}$ the Frobenius norm:
$\|x\|_{F} = \left( \displaystyle\sum_{k=1}^n |x_k|^2 \right)^{\frac{1}{2}}$.
\subsection{The singular value decomposition (SVD)}
\label{sec:SVD}
The SVD was introduced briefly in the introduction. Here we define it again, with some
more detail added. Let $\mtx{A}$ denote an $m\times n$ matrix, and set $r = \min(m,n)$.
Then $\mtx{A}$ admits a factorization
\begin{equation}
\label{eq:svd}
\begin{array}{ccccccccccccccccccccc}
\mtx{A} &=& \mtx{U} & \mtx{\Sigma} & \mtx{V}^{*},\\
m\times n && m\times r & r \times r & r\times n
\end{array}
\end{equation}
where the matrices $\mtx{U}$ and $\mtx{V}$ are orthonormal, and $\mtx{\Sigma}$ is diagonal.
We let $\{\vct{u}_{i}\}_{i=1}^{r}$ and $\{\vct{v}_{i}\}_{i=1}^{r}$ denote the columns of
$\mtx{U}$ and $\mtx{V}$, respectively. These vectors are the left and right singular vectors
of $\mtx{A}$. As in the introduction, the diagonal elements $\{\sigma_{j}\}_{j=1}^{r}$ of
$\mtx{\Sigma}$ are the singular values of $\mtx{A}$. We order these so that
$\sigma_{1}  \geq \sigma_{2} \geq \cdots \geq \sigma_{r} \geq 0$.
We let $\mtx{A}_{k}$ denote the truncation of the SVD to its first $k$ terms,
$\mtx{A}_{k} = \sum_{i=1}^{k}\sigma_{i}\,\vct{u}_{i}\,\vct{v}_{j}^{*}$.
It is easily verified that
\begin{equation}
\label{eq:minerrors}
\|\mtx{A} - \mtx{A}_{k}\| = \sigma_{k+1},
\qquad\mbox{and that}\qquad
\|\mtx{A} - \mtx{A}_{k}\|_{F} = \left(\sum_{j=k+1}^{\min(m,n)} \sigma_{j}^{2}\right)^{1/2}.
\end{equation}
Moreover, the Eckart-Young theorem \cite{1936_eckart_young} states that these errors are
the smallest possible errors that can be incurred when approximating $\mtx{A}$ by
a matrix of rank $k$.

\subsection{Pivoted \QR~factorizations}
Let $\mtx{A}$ be an $m\times n$ matrix with real or complex entries,
and set $r = \min(m,n)$. The (compact) \QR-factorization of $\mtx{A}$
then takes the form
\begin{equation}
\label{eq:APQR}
\begin{array}{ccccccc}
\mtx{A} & \mtx{P} &=& \mtx{Q} & \mtx{S},\\
m\times n & n\times n && m\times r & r\times n
\end{array}
\end{equation}
where $\mtx{P}$ is a permutation matrix, $\mtx{Q}$ has orthonormal columns,
and $\mtx{S}$ is upper triangular (the matrix we call ``$\mtx{S}$'' is customarily
labeled ``$\mtx{R}$'', but we use that letter for one of the factors in the
\CUR-decomposition). The permutation matrix $\mtx{P}$ can more efficiently
be represented via a vector $J \in \mathbb{Z}_{+}^{n}$ of indices
such that $\mtx{P} = \mtx{I}(:,J)$ where $\mtx{I}$ is the $n\times n$ identity matrix.
The factorization (\ref{eq:APQR}) can then be written
\begin{equation}
\label{eq:QR}
\begin{array}{ccccccc}
\mtx{A}(:,J) &=& \mtx{Q} & \mtx{S}.\\
m\times n && m\times r & r\times n
\end{array}
\end{equation}
The \QR-factorization is often computed via column pivoting combined
with either the Gram-Schmidt process, Householder reflectors \cite{MR3024913},
or Givens rotations \cite{MR882441}. The resulting factor $\mtx{S}$ then satisfies various
decay conditions \cite{MR3024913},
such as:
$$
\mtx{S}(j,j)\geq \|\mtx{S}(j:m, \ell)\|_2\qquad\mbox{for all}\ j<\ell.
$$

The \QR-factorization (\ref{eq:QR}) expresses $\mtx{A}$
as a sum of $r$ rank-one matrices
$$
\mtx{A}(:,J) \approx \sum_{j = 1}^{r} \mtx{Q}(:,j)\,\mtx{S}(j,:).
$$
The \QR-factorization is often built incrementally via a greedy
algorithm such as column pivoted Gram-Schmidt. This opens up the
possibility of stopping after the first $k$ terms have been computed
and settling for a ``partial \QR-factorization of $\mtx{A}$''. We can
express the error term by splitting the factors in (\ref{eq:QR})
as follows:
\begin{equation}
\label{eqn:trunc}
\mtx{A}(:, J)
\quad = \quad
\kbordermatrix{&
k & r-k\\
m &\mtx{Q}_1&\mtx{Q}_2}
\times
\kbordermatrix{
 & n
\\ k & \mtx{S}_1
\\ r-k & \mtx{S}_2
}
\ =\
\mtx{Q}_1 \mtx{S}_1 + \mtx{Q}_2 \mtx{S}_2.
\end{equation}
Observe that since the \SVD~is optimal, it is always the case that
$$
\sigma_{k+1}(\mtx{A}) \leq \|\mtx{Q}_2\,\mtx{S}_2\| = \|\mtx{S}_2\|.
$$
We say that a factorization is a ``rank-revealing  \QR-factorization (RRQR)'' if the
ratio $\frac{\|\mtx{S}_2\|}{\sigma_{k+1}(\mtx{A})}$ is guaranteed to be bounded
\cite{GuEisenstat1}.
(Some authors require additionally that $\sigma_{j}(\mtx{S}_1) \approx
\sigma_{j}(\mtx{A})$ for $1 \leq j \leq k$). Classical column pivoted Gram-Schmidt
\textit{typically} results in an RRQR, but there are counter-examples. More
sophisticated versions such as \cite{GuEisenstat1} provably compute an RRQR, 
but are substantially harder to code, and the gain compared
to standard methods is typically modest.


\subsection{Low rank interpolative decomposition}
\label{sect:id}

An approximate rank $k$ interpolative decomposition (\ID) of a
matrix $\mtx{A}\in\mathbb{C}^{m\times n}$
is the approximate factorization:
\begin{equation}
\label{eq:defID1}
\begin{array}{cccc}
\mtx{A} &\approx& \mtx{C} &\mtx{V}^*,\\
m\times n && m\times k & k\times n
\end{array}
\end{equation}
where the partial column skeleton
$\mtx{C}\in\mathbb{C}^{m\times k}$
is given by a subset of the columns of $\mtx{A}$
and $\mtx{V}$ is well-conditioned in a sense that we will make precise shortly.
The interpolative decomposition
approximates $\mtx{A}$ using only some of its columns,
and one of the advantages of doing so is that the
more compact description of the range of $\mtx{A}$ given
by its skeleton preserves some of the properties
of the original matrix $\mtx{A}$ such as sparsity and non-negativity.
In this section we show one way of obtaining
a low rank interpolative decomposition,
via the truncated \QR~with column pivoting.

From \eqref{eqn:trunc}, we see that as long as
$\|\mtx{S}_2\|_2$ is small, we can approximate
$\mtx{A}(:, J)$ by $\mtx{Q}_1 \mtx{S}_1$.
We show that the approximation term $\mtx{Q}_1 \mtx{S}_1$
provides a rank $k$ \ID~to the matrix $\mtx{A}$.
In fact, the approximation term $\mtx{Q}_1 \mtx{S}_1$
is the image of a skeleton of $\mtx{A}$, i.e.,
the range of $\mtx{Q}_1 \mtx{S}_1$ is contained in the span
of $k$ columns of $\mtx{A}$.
Splitting the columns of $\mtx{S}_1$ and $\mtx{S}_2$ as follows:
\begin{equation}
\label{eqn:Ssplit}
\mtx{S}_1 =\ \kbordermatrix{&
k & n-k\\
k &\mtx{S}_{11} & \mtx{S}_{12}}
\quad \mbox{and} \quad
\mtx{S}_2 =\ \kbordermatrix{&
k & n-k\\
r-k &\mtx{0} & \mtx{S}_{22}} ,
\quad(\text{i.e., }\ \
\mtx{S} =
\kbordermatrix{&
k & n-k\\
k &\mtx{S}_{11} & \mtx{S}_{12}\\
r-k &\mtx{0} &\mtx{S}_{22}},\ )
\end{equation}
it is immediate that
\begin{equation*}
\mtx{A}(:, J)
\ = \
\mtx{Q}_1 \begin{bmatrix} \mtx{S}_{11} & \mtx{S}_{12} \end{bmatrix}+
\mtx{Q}_2 \begin{bmatrix} 0 & \mtx{S}_{22} \end{bmatrix}
\ = \
\kbordermatrix{&
k & n-k\\
m &\mtx{Q}_1 \mtx{S}_{11} & \mtx{Q}_1 \mtx{S}_{12} + \mtx{Q}_2 \mtx{S}_{22}
}.
\end{equation*}
In other words, we see that the matrix $\mtx{Q}_1 \mtx{S}_{11}$ equals the
first $k$ columns of $\mtx{A}(:, J)$.
We now define the factor $\mtx{C}$ in (\ref{eq:defID1}) via
\begin{equation*}
   \mtx{C}
\ :=\
\mtx{A}(:,J(1:k))
\ = \
\mtx{Q}_1 \mtx{S}_{11}.
\end{equation*}
Then the dominant term $\mtx{Q}_1 \mtx{S}_1$ in (\ref{eqn:trunc}) can be written
\begin{equation*}
\mtx{Q}_1 \mtx{S}_1
\ =\
\begin{bmatrix}
\mtx{Q}_1 \mtx{S}_{11} & \mtx{Q}_1 \mtx{S}_{12} \end{bmatrix}
\ = \
\mtx{Q}_1 \mtx{S}_{11}  \, [\mtx{I}_k \quad \mtx{T}_l].
\ = \
\mtx{C}  \, [\mtx{I}_k \quad \mtx{T}_l],
\end{equation*}
where $\mtx{T}_l$ is a solution to the matrix equation
\begin{equation}
\label{eq:S11T=S12}
\mtx{S}_{11} \mtx{T}_l = \mtx{S}_{12}.
\end{equation}
The equation (\ref{eq:S11T=S12}) obviously has a solution whenever $\mtx{S}_{11}$ is non-singular.
If $\mtx{S}_{11}$ is singular, then one can show that $\mtx{A}$ must necessarily
have rank $k'$ less than $k$, and the bottom $k-k'$ rows in (\ref{eq:S11T=S12}) consist
of all zeros, so there exists a solution in this case as well.
We now recover the factorization (\ref{eq:defID1}) upon setting
\begin{equation}
\label{eqn:column_approx_id2}
{\mtx{V}}^*=\begin{bmatrix} \mtx{I}_k & \mtx{T}_l\end{bmatrix} \mtx{P}^*.
\end{equation}
The approximation error of the \ID~obtained via truncated
\QR~with pivoting is the same as that of the truncated \QR:
\begin{equation}
\label{eq:id_error}
\mtx{A} - \mtx{C} \mtx{V}^* = \mtx{Q}_2 \mtx{S}_{22}
\end{equation}

\begin{remark}
This section describes a technique for converting a QR decomposition of $\mtx{A}$
into the interpolative decomposition (\ref{eq:CUR2}). By applying an
analogous procedure to the adjoint $\mtx{A}^{*}$ of $\mtx{A}$, we obtain the sibling
factorization (\ref{eq:CUR2rows}) that uses a sub-selection of rows of $\mtx{A}$ to
span the row space. In other words, to find the column skeleton, we perform
Gram-Schmidt on the columns on $\mtx{A}$, and in order to find the row skeleton,
we perform Gram-Schmidt to the rows of $\mtx{A}$.
\end{remark}

\subsection{Two sided interpolative decomposition}
\label{sec:twoID}

A two sided \ID~approximation for matrices,
is constructed via two successive one sided \ID s.
Assume that we have performed the one sided decomposition to obtain
\eqref{eqn:column_approx_id2}.
Then perform an ID of the adjoint of $\mtx{C}$ to determine a matrix $\mtx{W}$ and
an index vector $I$ such that
\begin{equation}
\label{eq:boat}
\begin{array}{cccccccccccccccc}
\mtx{C}^{*} &=& \mtx{C}(I(1:k),:)^{*} & \mtx{W}^{*}.\\
k \times m && k\times k & k\times m
\end{array}
\end{equation}
In other words, the index vector $I$ is obtained by performing a
pivoted Gram-Schmidt process on the rows of $\mtx{C}$.
Observe that the factorization (\ref{eq:boat}) is exact since it is a \textit{full}
(as opposed to \textit{partial}) QR factorization.
We next insert (\ref{eq:boat}) into (\ref{eq:defID1}), using that
$\mtx{C}(I(1:k),:) = \mtx{A}(I(1:k),J(1:k))$, and obtain
\begin{equation}
\label{eqn:two_sided_approx_id}
\mtx{A}
\ \approx\ \mtx{C} \mtx{V}^{*} =
\mtx{W} \mtx{A}(I(1:k),J(1:k)) \mtx{V}^{*}.
\end{equation}

We observe that the conversion of the single-sided ID \eqref{eqn:column_approx_id2}
into the two-sided ID \eqref{eqn:two_sided_approx_id} is \textit{exact} in the
sense that no additional approximation error is incurred:
$$
\mtx{A} - \mtx{C}\,\mtx{V}^{*} = \mtx{A} - \mtx{W}\,\mtx{A}(I(1:k),J(1:k))\,\mtx{V}^{*} = \mtx{Q}_{2} \mtx{S}_{2}.
$$

\begin{remark}
The index vector $I$ and the basis matrix $\mtx{W}$ computed
using the approach described in this section form an approximate
row-ID for $\mtx{A}$ in the sense that $\mtx{A} \approx \mtx{W}\,\mtx{A}(I,:)$.
However, the resulting error tends to be slightly higher than the
error incurred if Gram-Schmidt is performed directly on the rows
of $\mtx{A}$ (rather than on the rows of $\mtx{C}$), cf.~Lemma \ref{lem:Etilde}.
\end{remark}


\subsection{The \CUR~Decomposition}
A rank $k$ \CUR~factorization of a matrix $\mtx{A}\in\mathbb{C}^{m\times n}$ is given by
$$
\begin{array}{ccccccc}
   \mtx{A} &\approx& \mtx{C} &\mtx{U} &\mtx{R},\\
m\times n && m\times k & k\times k & k\times n
\end{array}
$$
where $\mtx{C}$ consists of $k$ columns of $\mtx{A}$, and $\mtx{R}$ consists of
$k$ rows of $\mtx{A}$.
The decomposition is typically obtained
in three steps \cite{Mitrovic_curdecomposition}. First, some scheme is used to assign a
weight or the so called leverage score (of importance) to each column and row in the matrix.
This is typically done either using the $\ell_2$ norms of the columns and rows or
by using the leading singular vectors of $\mtx{A}$ \cite{MR2443975}.
Next, the matrices $\mtx{C}$ and $\mtx{R}$ are constructed via a randomized sampling procedure,
using the leverage scores to assign a sampling probability to each column and row.
Finally, the $\mtx{U}$ matrix is computed via:
\begin{equation}
\label{eq:CURisbad}
\mtx{U} \approx \mtx{C}^{\dagger} \mtx{A} \mtx{R}^{\dagger},
\end{equation}
with $\mtx{C}^{\dagger}$ and $\mtx{R}^{\dagger}$ being the pseudoinverses
of $\mtx{C}$ and $\mtx{R}$.


Many techniques for computing \CUR~factorizations have been proposed. In particular, we mention the recent work
of Sorensen and Embree \cite{2014arXiv1407.5516S} on the DEIM-CUR method. A number of
standard \CUR~algorithms is implemented in the software package rCUR \cite{rCUR} which we use
for our numerical comparisons. The methods in the rCUR package utilize eigenvectors to assign
weights to columns and rows of $\mtx{A}$. Computing the eigenvectors exactly amounts to doing
the \SVD~which is very expensive. However, instead of the full \SVD, when a \CUR~of rank $k$
is required, we can utilize instead the randomized \SVD~algorithm \cite{MR2806637} to compute
an approximate \SVD~of rank $k$ at substantially lower cost.

\begin{remark}[Conditioning of CUR]
\label{remark:condCUR}
For matrices whose singular value experience substantial decay, the
accuracy of the CUR factorization can deteriorate due to effects of
ill-conditioning. To simplify slightly, one would normally expect the
leading $k$ singular values of $\mtx{C}$ and $\mtx{R}$ to be of roughly the same order
of magnitude as the leading $k$ singular values of $\mtx{A}$.
Since low-rank factorizations are most useful when applied to matrices whose
singular values decay reasonably rapidly, we would \textit{typically} expect
$\mtx{C}$ and $\mtx{R}$ to be highly ill-conditioned, with condition numbers roughly
on the order of $\sigma_{1}(\mtx{A})/\sigma_{k}(\mtx{A})$. Hence, in the typical case,
evaluation of the formula (\ref{eq:CURisbad}) can be expected to result in
substantial loss of accuracy due to accumulation of round-off errors.
Observe that the ID does not suffer from this problem; in (\ref{eq:CUR2b}),
the matrix $\mtx{A}_{\rm skel}$ tends to be ill-conditioned, but it does not
need to be inverted. (The matrices $\mtx{W}$ and $\mtx{V}$ are well-conditioned.)
\end{remark}

\section{The \CURID~algorithm}
\label{sect:CUR_ID}

In this section, we demonstrate that the \CUR~decomposition can easily be constructed from
the basic two-sided \ID~(which in turn, recall, can be built from a column
pivoted QR factorization), via a procedure we call ``\CURID''.
The difference between recently popularized algorithms for \CUR~computation and \CURID~is in
the choice of columns and rows of $\mtx{A}$ for forming $\mtx{C}$ and $\mtx{R}$.
In the \CURID~algorithm, the columns and rows are chosen via the two sided \ID.
The idea behind the use of \ID~for obtaining the \CUR~factorization
is that the matrix $\mtx{C}$ in the \CUR~factorization is
immediately available from the \ID~
(see \eqref{eqn:column_approx_id2}),
and the matrix $\mtx{V}\in\mathbb{C}^{n\times k}$
not only captures a rough row space description of $\mtx{A}$
but also is of rank at most $k$. A rank $k$ \ID~on $\mtx{C}$,
being an exact factorization of $\mtx{C}$ which is of
rank at most $k$, could hint on the relevant rows of
$\mtx{A}$ that approximate the entire row space of $\mtx{A}$ itself.
Specifically, similar to \eqref{eqn:column_approx_id2} where
approximating $\range(\mtx{A})$ using $\mtx{C}$ incurs an error term
$\begin{bmatrix}\mtx{0}&\mtx{Q}_2 S_{22}\end{bmatrix}$,
we can estimate the error of approximating $\range(\mtx{A}^*)$
using $\mtx{A}(I(1:k),:)$; see Lemma \ref{lem:Etilde} below.

The \CURID~algorithm is based on the two sided \ID~factorization,
and as a starting point, we assume the factorization \eqref{eqn:two_sided_approx_id}
has been computed using the procedures described in Section \ref{sec:prel}.
In other words, we assume that the index vectors $I$ and $J$,
and the basis matrices $\mtx{V}$ and $\mtx{W}$, are all available.
We then define
\begin{equation}
\label{eq:choice_C_and_R}
\mtx{C}
\ =\
\mtx{A}(:,J(1:k))
\quad\text{and}\quad
\mtx{R} = \mtx{A}(I(1:k),:).
\end{equation}
Consequently, $\mtx{C}$ and $\mtx{R}$ are respectively subsets of columns and of
rows of $\mtx{A}$, with $J$ and $I$ determined by the pivoted \QR~factorizations.
Next we construct a $k\times k$ matrix $\mtx{U}$ such that
$\mtx{A}\approx \mtx{C}\mtx{U}\mtx{R}$.
We know that
\begin{equation}
\label{eq:netflix1}
\mtx{A} \approx \mtx{C}\,\mtx{V}^{*},
\end{equation}
and we seek a factor $\mtx{U}$ such that
\begin{equation}
\label{eq:netflix2}
\mtx{A} \approx \mtx{C}\,\mtx{U}\,\mtx{R}.
\end{equation}
By inspecting (\ref{eq:netflix1}) and (\ref{eq:netflix2}), we find that we would
achieve our objective if we could determine a matrix $\mtx{U}$ such that
\begin{equation}
\label{eq:overd}
\begin{array}{cccccccccccccc}
\mtx{U} & \mtx{R} &=& \mtx{V}^{*}.\\
k\times k & k\times m && k\times m
\end{array}
\end{equation}
Unfortunately, (\ref{eq:overd}) is an over-determined system, but at least intuitively,
it seems plausible that it should have a fairly accurate solution, given that the
rows of $\mtx{R}$ and the rows of $\mtx{V}^{*}$ should, by construction, span roughly the
same space (namely, the space spanned by the $k$ leading right singular vectors
of $\mtx{A}$). Solving (\ref{eq:overd}) in the least-square sense, we arrive at our
definition of $\mtx{U}$:
\begin{equation}
\label{eq:defU}
\mtx{U} := \mtx{V}^* \mtx{R}^{\dagger}.
\end{equation}

The construction of $\mtx{C}$, $\mtx{U}$, and $\mtx{R}$ in the previous paragraph was based
on heuristics. We next demonstrate that the approximation error is comparable
to the error resulting from the original QR-factorization. First, let us
define $\mtx{E}$ and $\tilde{\mtx{E}}$ as the errors in the column and row IDs of $\mtx{A}$,
respectively,
\begin{align}
\label{eq:approx1}
\mtx{A} =&\ \mtx{C}\,\mtx{V}^{*} + \mtx{E},\\
\label{eq:approx2}
\mtx{A} =&\ \mtx{W}\,\mtx{R} + \tilde{\mtx{E}}.
\end{align}
Recall that $\mtx{E}$ is a quantity we can control by continuing the original
QR factorization until $\|\mtx{E}\|$ is smaller than some given threshold.
We will next prove two lemmas. The first states that the error in the CUR
decomposition is bounded by $\|\mtx{E}\| + \|\tilde{\mtx{E}}\|$. The second states
that $\|\tilde{\mtx{E}}\|$ is small whenever $\|\mtx{E}\|$ is small (and again,
$\|\mtx{E}\|$ we can control).

\begin{lemma}
\label{lem:CUR}
Let $\mtx{A}$ be an $m\times n$ matrix that satisfies the approximate
factorizations (\ref{eq:approx1}) and (\ref{eq:approx2}). Suppose further
that $\mtx{R}$ is full rank, and that the $k\times k$ matrix $\mtx{U}$ is defined by
(\ref{eq:defU}). Then
\begin{equation}
\label{eq:A_minus_CUR_twoEs}
\|\mtx{A} - \mtx{C}\mtx{U}\mtx{R}\| \leq \|\mtx{E}\| + \|\tilde{\mtx{E}}\|.
\end{equation}
\end{lemma}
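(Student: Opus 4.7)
The plan is to substitute the definition $\mtx{U} = \mtx{V}^{*}\mtx{R}^{\dagger}$ into the product $\mtx{C}\mtx{U}\mtx{R}$, and then manipulate $\mtx{A} - \mtx{C}\mtx{V}^{*}\mtx{R}^{\dagger}\mtx{R}$ by introducing the orthogonal projection $\mtx{R}^{\dagger}\mtx{R}$ onto the row space of $\mtx{R}$ (which is well defined since $\mtx{R}$ has full row rank). The key algebraic identity I aim to arrive at is
\begin{equation*}
\mtx{A} - \mtx{C}\mtx{U}\mtx{R} \;=\; \mtx{A}\bigl(\mtx{I} - \mtx{R}^{\dagger}\mtx{R}\bigr) \;+\; \mtx{E}\,\mtx{R}^{\dagger}\mtx{R},
\end{equation*}
obtained by writing $\mtx{C}\mtx{V}^{*} = \mtx{A} - \mtx{E}$ in $\mtx{C}\mtx{V}^{*}\mtx{R}^{\dagger}\mtx{R}$ and simplifying.

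Next I would use hypothesis (\ref{eq:approx2}), $\mtx{A} = \mtx{W}\mtx{R} + \tilde{\mtx{E}}$. Since $\mtx{R}(\mtx{I} - \mtx{R}^{\dagger}\mtx{R}) = \mtx{0}$ (the matrix $\mtx{I} - \mtx{R}^{\dagger}\mtx{R}$ projects onto the kernel of $\mtx{R}$), the $\mtx{W}\mtx{R}$ contribution drops out, leaving $\mtx{A}(\mtx{I}-\mtx{R}^{\dagger}\mtx{R}) = \tilde{\mtx{E}}(\mtx{I}-\mtx{R}^{\dagger}\mtx{R})$. Substituting gives
\begin{equation*}
\mtx{A} - \mtx{C}\mtx{U}\mtx{R} \;=\; \tilde{\mtx{E}}\bigl(\mtx{I} - \mtx{R}^{\dagger}\mtx{R}\bigr) \;+\; \mtx{E}\,\mtx{R}^{\dagger}\mtx{R}.
\end{equation*}

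Finally I would take spectral norms, using the triangle inequality together with the fact that $\mtx{R}^{\dagger}\mtx{R}$ and $\mtx{I} - \mtx{R}^{\dagger}\mtx{R}$ are orthogonal projections and therefore have spectral norm at most $1$. This yields
\begin{equation*}
\|\mtx{A} - \mtx{C}\mtx{U}\mtx{R}\| \;\leq\; \|\tilde{\mtx{E}}\|\,\|\mtx{I} - \mtx{R}^{\dagger}\mtx{R}\| + \|\mtx{E}\|\,\|\mtx{R}^{\dagger}\mtx{R}\| \;\leq\; \|\mtx{E}\| + \|\tilde{\mtx{E}}\|,
\end{equation*}
which is exactly (\ref{eq:A_minus_CUR_twoEs}).

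I do not expect a serious obstacle here: the only subtle point is recognizing that one should introduce $\mtx{R}^{\dagger}\mtx{R}$ as a projection and split the residual along its range and kernel, so that the two approximation hypotheses can be brought to bear on orthogonal pieces. Once that decomposition is in hand, the rest is the triangle inequality and the observation that orthogonal projections are norm-one operators. (In fact a slightly sharper bound $\sqrt{\|\mtx{E}\|^{2} + \|\tilde{\mtx{E}}\|^{2}}$ is available from the same identity by exploiting that $\mtx{R}^{\dagger}\mtx{R}$ and $\mtx{I}-\mtx{R}^{\dagger}\mtx{R}$ have orthogonal ranges, but the statement only asks for the looser bound.)
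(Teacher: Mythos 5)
Your proposal is correct and follows essentially the same route as the paper's own proof: the identity $\mtx{A} - \mtx{C}\mtx{U}\mtx{R} = \tilde{\mtx{E}}(\mtx{I}-\mtx{R}^{\dagger}\mtx{R}) + \mtx{E}\mtx{R}^{\dagger}\mtx{R}$ is exactly the decomposition used there, obtained by the same two substitutions, and the conclusion follows by the same triangle inequality and the norm-one property of the projections. Nothing further is needed.
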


\begin{proof}
Using first (\ref{eq:defU}) and then (\ref{eq:approx1}), we find
\begin{equation}
\label{eq:andare1}
\mtx{A} - \mtx{C}\mtx{U}\mtx{\mtx{R}} =
\mtx{A} - \mtx{C}\mtx{V}^{*}\mtx{\mtx{R}}^{\dagger}\mtx{\mtx{R}} =
\mtx{A} - (\mtx{A} - \mtx{E})\mtx{R}^{\dagger}\mtx{R} =
\bigl(\mtx{A} - \mtx{A}\mtx{R}^{\dagger}\mtx{R}\bigr) + \mtx{E}\mtx{R}^{\dagger}\mtx{R}.
\end{equation}
To bound the term $\mtx{A} - \mtx{A}\mtx{R}^{\dagger}\mtx{R}$ we use (\ref{eq:approx2}) and the fact that $\mtx{R}\mtx{R}^{\dagger}\mtx{R} = \mtx{R}$ to achieve
\begin{equation}
\label{eq:andare2}
\mtx{A} - \mtx{A}\mtx{R}^{\dagger}\mtx{R} =
\mtx{A} - (\mtx{W}\mtx{R} + \tilde{\mtx{E}})\mtx{R}^{\dagger}\mtx{R} =
\mtx{A} - \mtx{W}\mtx{R} - \tilde{\mtx{E}}\mtx{R}^{\dagger}\mtx{R} =
\tilde{\mtx{E}} - \tilde{\mtx{E}}\mtx{R}^{\dagger}\mtx{R} =
\tilde{\mtx{E}}(I-\mtx{R}^{\dagger}\mtx{R}).
\end{equation}
Inserting (\ref{eq:andare2}) into (\ref{eq:andare1}) and taking the norms of the result, we get
$$
\|\mtx{A} - \mtx{C}\mtx{U}\mtx{R}\| =
\|\tilde{\mtx{E}}(\mtx{I}-\mtx{R}^{\dagger}\mtx{R}) + \mtx{E}\mtx{R}^{\dagger}\mtx{R}\| \leq
\|\tilde{\mtx{E}}(\mtx{I}-\mtx{R}^{\dagger}\mtx{R})\| + \|\mtx{E}\mtx{R}^{\dagger}\mtx{R}\| \leq
\|\tilde{\mtx{E}}\| + \|\mtx{E}\|,
$$
where in the last step we used that $\mtx{R}\mtx{R}^{\dagger}$ and
$\mtx{I} - \mtx{R}\mtx{R}^{\dagger}$ are both orthonormal projections.
\end{proof}

\begin{lemma}
\label{lem:Etilde}
Let $\mtx{A}$ be an $m\times n$ matrix that admits the factorization (\ref{eq:approx1}),
with error term $\mtx{E}$.
Suppose further that $I = [I_{\rm skel},I_{\rm res}]$ and $\mtx{T}$ form
the output of the ID of the matrix $\mtx{C}$, so that
\begin{equation}
\label{eq:hannah1}
\mtx{C} = \mtx{W}\mtx{C}(I_{\rm skel},:),\qquad\mbox{where}\qquad \mtx{W} = \mtx{P}\left[\begin{array}{c}\mtx{I} \\ \mtx{T}^{*}\end{array}\right],
\end{equation}
and where $\mtx{P}$ is the permutation matrix for which
$\mtx{P}\mtx{A}(I,:) = \mtx{A}$. Now define the matrix $\mtx{R}$ via
\begin{equation}
\label{eq:hannah2}
\mtx{R} = \mtx{A}(I_{\rm skel},:).
\end{equation}
Observe that $\mtx{R}$ consists of the $k$ rows of $\mtx{A}$ selected in the skeletonization of $\mtx{C}$.
Finally, set
\begin{equation}
\label{eq:hannah3}
\mtx{F} = \bigl[-\mtx{T}^{*}\ \ \mtx{I}\bigr]\mtx{P}^{*}.
\end{equation}
Then the product $\mtx{W}\mtx{R}$ approximates $\mtx{A}$, with a residual error
\begin{equation}
\label{eq:hannah4}
\tilde{\mtx{E}} = \mtx{A} - \mtx{W}\mtx{R} = \mtx{P}\left[\begin{array}{c} \mtx{0} \\ \mtx{F}\mtx{E}\end{array}\right].
\end{equation}
\end{lemma}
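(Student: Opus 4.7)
The plan is to conjugate the identity by the permutation $\mtx{P}$, splitting the matrix $\mtx{P}^{*}\mtx{A}$ into its skeleton and residual row blocks, and then show that the error in each block has the predicted form. Since $\mtx{P}\mtx{A}(I,:)=\mtx{A}$, we have $\mtx{P}^{*}\mtx{A}=\bigl[\begin{smallmatrix}\mtx{A}(I_{\rm skel},:)\\ \mtx{A}(I_{\rm res},:)\end{smallmatrix}\bigr]$, and using the block form of $\mtx{W}$ together with $\mtx{R}=\mtx{A}(I_{\rm skel},:)$ gives
$$
\mtx{P}^{*}\mtx{W}\mtx{R}
\;=\;\begin{bmatrix}\mtx{I}\\ \mtx{T}^{*}\end{bmatrix}\mtx{A}(I_{\rm skel},:)
\;=\;\begin{bmatrix}\mtx{A}(I_{\rm skel},:)\\ \mtx{T}^{*}\mtx{A}(I_{\rm skel},:)\end{bmatrix}.
$$
Subtracting yields $\mtx{P}^{*}(\mtx{A}-\mtx{W}\mtx{R})=\bigl[\begin{smallmatrix}\mtx{0}\\ \mtx{A}(I_{\rm res},:)-\mtx{T}^{*}\mtx{A}(I_{\rm skel},:)\end{smallmatrix}\bigr]$. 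So it suffices to identify the bottom block with $\mtx{F}\mtx{E}$.

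The main step is to bring $\mtx{E}$ into the picture using the one-sided column ID. Since $\mtx{C}=\mtx{W}\mtx{C}(I_{\rm skel},:)=\mtx{P}\bigl[\begin{smallmatrix}\mtx{I}\\ \mtx{T}^{*}\end{smallmatrix}\bigr]\mtx{C}(I_{\rm skel},:)$, reading off the bottom block of $\mtx{P}^{*}\mtx{C}$ gives the exact relation $\mtx{C}(I_{\rm res},:)=\mtx{T}^{*}\mtx{C}(I_{\rm skel},:)$, and right-multiplication by $\mtx{V}^{*}$ produces $(\mtx{C}\mtx{V}^{*})(I_{\rm res},:)=\mtx{T}^{*}(\mtx{C}\mtx{V}^{*})(I_{\rm skel},:)$. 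Substituting $\mtx{C}\mtx{V}^{*}=\mtx{A}-\mtx{E}$ then rearranges to
$$
\mtx{A}(I_{\rm res},:)-\mtx{T}^{*}\mtx{A}(I_{\rm skel},:)
\;=\;\mtx{E}(I_{\rm res},:)-\mtx{T}^{*}\mtx{E}(I_{\rm skel},:)
\;=\;\bigl[-\mtx{T}^{*}\ \ \mtx{I}\bigr]\mtx{P}^{*}\mtx{E}
\;=\;\mtx{F}\mtx{E},
$$
where in the middle equality I use $\mtx{P}^{*}\mtx{E}=\bigl[\begin{smallmatrix}\mtx{E}(I_{\rm skel},:)\\ \mtx{E}(I_{\rm res},:)\end{smallmatrix}\bigr]$. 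Plugging this back in and premultiplying by $\mtx{P}$ gives the claimed formula \eqref{eq:hannah4}.

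The only place one can slip is in the bookkeeping with $\mtx{P}$: one must remember that $\mtx{P}$ here acts on the $m$ row indices and reorders them according to $I=[I_{\rm skel},I_{\rm res}]$, and in particular that $\mtx{P}^{*}$ is the operator that actually performs the reordering on left-multiplied matrices. Once this convention is pinned down, the computation is essentially the blockwise identity for the one-sided ID applied to the rows rather than the columns, and the rest of the argument is mechanical algebra with no estimates or norm manipulations.
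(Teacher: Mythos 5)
Your proposal is correct and follows essentially the same route as the paper: conjugate by $\mtx{P}$ to isolate the bottom block $\mtx{A}(I_{\rm res},:)-\mtx{T}^{*}\mtx{A}(I_{\rm skel},:)=\mtx{F}\mtx{A}$, then use the exactness of the row-ID of $\mtx{C}$ together with $\mtx{A}=\mtx{C}\mtx{V}^{*}+\mtx{E}$ to conclude $\mtx{F}\mtx{A}=\mtx{F}\mtx{E}$. The paper phrases this last step as $\mtx{F}\mtx{W}=\mtx{0}$ while you unpack the same identity blockwise, but the argument is the same.
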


\begin{proof}
From the definitions of $\mtx{W}$ in (\ref{eq:hannah1}) and $\mtx{R}$ in
(\ref{eq:hannah2}) we find
\begin{multline}
\label{eq:elec1}
\mtx{A} - \mtx{W}\mtx{R} = \mtx{P} \mtx{A}(I,:) - \mtx{W}\mtx{R} =
\mtx{P}\left[\begin{array}{c} \mtx{A}(I_{\rm skel},:) \\ \mtx{A}(I_{\rm res},:) \end{array}\right] -
\mtx{P}\left[\begin{array}{c} \mtx{I} \\ \mtx{T}^{*} \end{array}\right]\mtx{A}(I_{\rm skel},:)\\
= \mtx{P}\left[\begin{array}{c} \mtx{0} \\ \mtx{A}(I_{\rm res},:) - \mtx{T}^{*}\mtx{A}(I_{\rm skel},:)\end{array}\right] =
\mtx{P}\left[\begin{array}{c} \mtx{0} \\ \mtx{F}\mtx{A}\end{array}\right].
\end{multline}
To bound the term $\mtx{F}\mtx{A}$ in (\ref{eq:elec1}), we invoke (\ref{eq:approx1})
to obtain
\begin{equation}
\label{eq:elec2}
\mtx{F}\mtx{A} = \mtx{F}\mtx{C}\mtx{V}^{*} + \mtx{F}\mtx{E} = \{\mbox{Insert (\ref{eq:hannah1})}\} = \mtx{F}\mtx{W}\mtx{C}(I_{\rm skel},:)\mtx{V}^{*} + \mtx{F}\mtx{E} = \mtx{F}\mtx{E},
\end{equation}
since $\mtx{F}\mtx{W} = \mtx{0}$ due to (\ref{eq:hannah1}) and (\ref{eq:hannah3}).
Finally, insert (\ref{eq:elec2}) into (\ref{eq:elec1}) to obtain (\ref{eq:hannah4}).
\end{proof}

Equation (\ref{eq:hannah4}) allows us to bound the norm of the error $\tilde{\mtx{E}}$
in (\ref{eq:approx2}). Simply observe that the definition of $\mtx{F}$ in (\ref{eq:hannah3})
implies that for any matrix $\mtx{X}$ we have:
\begin{equation*}
\mtx{F} \mtx{X} = \bigl[-\mtx{T}^{*}\ \ \mtx{I}\bigr]\mtx{P}^{*} \mtx{X} = \bigl[-\mtx{T}^{*}\ \ \mtx{I}\bigr] \left[\begin{matrix} \mtx{X}(I_{\rm skel},:) \\ \mtx{X}(I_{\rm res},:) \end{matrix} \right] = -\mtx{T}^* \mtx{X}(I_{\rm skel},:) + \mtx{X}(I_{\rm res},:),
\end{equation*}
so that:
\begin{equation}
\label{eq:error_bound_FX}
\|\mtx{F}\mtx{X}\| = \|\mtx{X}(I_{\rm res},:) - \mtx{T}^*\mtx{X}(I_{\rm skel},:)\| \leq
\|\mtx{X}(I_{\rm res},:)\| + \|\mtx{T}\|\,\|\mtx{X}(I_{\rm skel},:)\| \leq
(1 + \|\mtx{T}\|)\,\|\mtx{X}\|.
\end{equation}
This leads us to the following Corollary to Lemma \ref{lem:Etilde}:

\newpage
\begin{corollary}
Under the same assumptions as in Lemma \ref{lem:Etilde}, we have
\begin{equation}
\label{eq:error_bound_tildeE}
\|\tilde{\mtx{E}}\| \leq (1 + \|\mtx{T}\|)\,\|\mtx{E}\|.
\end{equation}
Further, assuming additionally that the conditions of Lemma \ref{lem:CUR} are satisfied,
\begin{equation}
\label{eq:error_bound_CUR}
\|\mtx{A} - \mtx{C}\mtx{U}\mtx{R}\| \leq (2 + \|\mtx{T}\|)\,\|\mtx{E}\|.
\end{equation}
\end{corollary}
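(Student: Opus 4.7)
The plan is to obtain both bounds essentially by chaining together the identity established in Lemma \ref{lem:Etilde} with the general estimate on $\|\mtx{F}\mtx{X}\|$ derived in (\ref{eq:error_bound_FX}), and then invoking Lemma \ref{lem:CUR} for the second inequality.

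For the first bound (\ref{eq:error_bound_tildeE}), I would start from the explicit formula $\tilde{\mtx{E}} = \mtx{P}\left[\begin{array}{c} \mtx{0} \\ \mtx{F}\mtx{E}\end{array}\right]$ given in (\ref{eq:hannah4}). Since $\mtx{P}$ is a permutation matrix and padding with rows of zeros does not change the spectral (or Frobenius) norm, we have $\|\tilde{\mtx{E}}\| = \|\mtx{F}\mtx{E}\|$. Then applying the inequality (\ref{eq:error_bound_FX}) with $\mtx{X} = \mtx{E}$ yields $\|\mtx{F}\mtx{E}\| \leq (1+\|\mtx{T}\|)\,\|\mtx{E}\|$, which is exactly (\ref{eq:error_bound_tildeE}).

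For the second bound (\ref{eq:error_bound_CUR}), I would simply combine the two ingredients we now have available: Lemma \ref{lem:CUR} gives $\|\mtx{A} - \mtx{C}\mtx{U}\mtx{R}\| \leq \|\mtx{E}\| + \|\tilde{\mtx{E}}\|$, and the bound just established gives $\|\tilde{\mtx{E}}\| \leq (1+\|\mtx{T}\|)\,\|\mtx{E}\|$. Adding these yields
\[
\|\mtx{A} - \mtx{C}\mtx{U}\mtx{R}\| \leq \|\mtx{E}\| + (1+\|\mtx{T}\|)\,\|\mtx{E}\| = (2 + \|\mtx{T}\|)\,\|\mtx{E}\|,
\]
which is (\ref{eq:error_bound_CUR}).

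There is no real obstacle here; the corollary is essentially a one-line consequence of the two preceding lemmas together with the paragraph developing (\ref{eq:error_bound_FX}). The only point worth being slightly careful about is the opening step where one passes from the block form of $\tilde{\mtx{E}}$ to $\|\mtx{F}\mtx{E}\|$, which relies on the unitary invariance of the chosen norm and on the fact that appending zero rows preserves both the spectral and Frobenius norms; this is routine but should be mentioned explicitly.
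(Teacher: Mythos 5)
Your proposal is correct and follows essentially the same route as the paper's proof: both apply (\ref{eq:error_bound_FX}) with $\mtx{X}=\mtx{E}$ to the block formula (\ref{eq:hannah4}) for $\tilde{\mtx{E}}$, and then combine the result with Lemma \ref{lem:CUR}. Your extra remark that the permutation and zero-padding preserve the norm is a minor (and correct) refinement of the paper's corresponding inequality step.
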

\begin{proof}
To show \eqref{eq:error_bound_tildeE}, we use \eqref{eq:hannah4} and 
\eqref{eq:error_bound_FX}:
\begin{equation*}
\|\tilde{\mtx{E}}\| = \left\|\mtx{P} \begin{bmatrix}\mtx{0} \\ \mtx{F}\mtx{E}\end{bmatrix}\right\| \leq \left\|\begin{bmatrix}\mtx{0} \\ \mtx{F}\mtx{E}\end{bmatrix}\right\| \leq 
(1 + \|\mtx{T}\|)\,\|\mtx{E}\|. 
\end{equation*}
For \eqref{eq:error_bound_CUR}, we use \eqref{eq:A_minus_CUR_twoEs} and \eqref{eq:error_bound_tildeE}:
\begin{equation*}
\|\mtx{A} - \mtx{C}\mtx{U}\mtx{R}\| \leq \|\mtx{E}\| + \|\tilde{\mtx{E}}\| \leq (2 + \|\mtx{T}\|)\,\|\mtx{E}\|.
\end{equation*}
\end{proof}

Now recall that the matrix $\mtx{T}$ contains the expansion coefficients in the
interpolative decomposition of $\mtx{C}$. These can be guaranteed
\cite{Liberty18122007} to all be bounded
by $1 + \nu$ in magnitude for any positive number $\nu$. The cost increases as $\nu \rightarrow 0$,
but for, e.g., $\nu=1$, the cost is very modest. Consequently, we find that for either
the spectral or the Frobenius norm, we can easily
guarantee $\|\mtx{T}\| \leq (1+\nu)\sqrt{k(n-k)}$,
with practical norm often far smaller.

\section{Efficient deterministic algorithms}
\label{sec:detalg}

Sections \ref{sec:prel} and \ref{sect:CUR_ID} describe how to obtain the \ID,
two-sided \ID, and the \CUR~decompositions from the output of the column
pivoted rank $k$ \QR~algorithm. In this section, we discuss implementation
details, and computational costs for each of the three algorithms.

\subsection{The one-sided interpolative decomposition}
\label{sec:onesideID}

We start discussing the algorithm for computing an \ID~decomposition which
returns an index vector $J$ and a matrix $\mtx{V}$ such that
$\mtx{A} \approx \mtx{A}(:,J(1:k)) \mtx{V}^{*}$, and is summarized as
Algorithm \ref{algo:rank_k_id}.
The only computational complication here is how to evaluate
$\mtx{T} = \mtx{S}_{11}^{-1} \mtx{S}_{12}$ on Line 4 of the algorithm. Observe
that $\mtx{S}_{11}$ is upper triangular, so as long as $\mtx{S}_{11}$ is
not too ill-conditioned, a simple backwards solve will compute
$\mtx{T}$ very efficiently.
When highly accurate factorizations are sought, however, $\mtx{S}_{11}$
will typically be sufficiently ill-conditioned that it is better
to view $\mtx{T}$ as the solution to a least squares system:
\begin{equation}
\label{eq:lsqr_for_T}
\mtx{T} = \arg\min_{\mtx{U}} \| \mtx{S}_{11}\mtx{U} - \mtx{S}_{12}\|.
\end{equation}
This equation can be solved using stabilized methods. For instance, we
can form a stabilized pseudo-inverse of $\mtx{S}_{11}$ by first computing its
SVD $\mtx{S}_{11} = \tilde{\mtx{U}} \tilde{\mtx{D}} \tilde{\mtx{V}}^*$.
Dropping all terms involving singular values smaller than some specified threshold,
we obtain a truncated decomposition
$\mtx{S}_{11} \approx \hat{\mtx{U}} \hat{\mtx{D}} \hat{\mtx{V}}^*$. Then set
$\mtx{T} = \hat{\mtx{V}} \hat{\mtx{D}}^{-1} \hat{\mtx{U}}^{*} \mtx{S}_{12}$.
We can also amend \eqref{eq:lsqr_for_T} with a regularization term (i.e. 
$\lambda \|\mtx{U}\|$), turning the minimization into a Tikhonov type problem, 
solvable by an application of the conjugate gradient scheme. 

There exists a variation of Algorithm \ref{algo:rank_k_id} that results in
an interpolation matrix $\mtx{V}$ whose entries are \textit{assured} to be
of moderate magnitude. The idea is to replace the column pivoted QR on
Line 1 by the so called ``strongly rank revealing QR factorization'' algorithm
described by Gu and Eisenstat in \cite{GuEisenstat1}. They prove that for any 
$\epsilon > 0$, one can construct matrices $\mtx{S}_{11}$ and $\mtx{S}_{12}$ such that the
equation $\mtx{S}_{11}\mtx{T} = \mtx{S}_{12}$ has a solution for which
$|\mtx{T}(i,j)| \leq 1 + \epsilon$ for every $i$ and $j$. The cost of the
algorithm increases as $\epsilon \rightarrow 0$, but remains reasonable
as long as $\epsilon$ is not too close to $0$. While such a provably robust
algorithm has strong appeal, we have found that in practice, standard column
pivoted \QR~works so well that the additional cost and coding effort required
to implement the method of \cite{GuEisenstat1} is not worthwhile.

With respect to storage cost, if $\mtx{A}$ is $m \times n$, to store the 
\ID~representation of $\mtx{A}$, 
we require $mk + k(n-k)$ units (since $\mtx{V}$ contains within it an 
identity matrix).

\subsection{The two-sided interpolative decomposition}

Next, we consider the two-sided \ID~described in Section \ref{sec:twoID}, and
summarized here as Algorithm \ref{algo:rank_k_tsid}.
The main observation is that $\mtx{C}^*$ is a matrix of rank at most $k$. Hence, a
rank $k$ \QR~decomposition would reconstruct it exactly so that the steps in
Algorithm \ref{algo:rank_k_id} produce an exact decomposition. Typically,
if the dimensions are not too large, the \QR~decomposition for step 2 can
be performed using standard software packages, such as, e.g., LAPACK.
For the two sided \ID, the storage requirement for an $m \times n$ matrix 
is $k(m-k) + k^2 + k(n-k)$, which is the same as for the one sided \ID~above.

\subsection{The CUR decomposition}

As demonstrated in Section \ref{sect:CUR_ID}, it is simple to convert Algorithm \ref{algo:rank_k_tsid}
for computing a two-sided ID into an algorithm for constructing the \CUR~decomposition.
We summarize the procedure as Algorithm \ref{algo:rank_k_cur}. The only complication here
concerns solving the least squares problem
\begin{equation}
\label{eq:cur_ur_sys}
\begin{array}{cccccccccc}
\mtx{U} & \mtx{R} &=& \mtx{V}^*\\
k\times k & k\times n && k\times n
\end{array}
\end{equation}
for $\mtx{U}$. In applications like data-mining, where $n$ might be very large, and modest accuracy
is sought, one may simply form the normal equations and solve those. For higher accuracy,
stabilized techniques based on a truncated QR or SVD decomposition of $\mtx{R}$ is preferable.

If feasible, one may also consider some adjustment to
\eqref{eq:cur_ur_sys} based on the error introduced by the truncated \QR~factorization.
Including the error term from \eqref{eq:id_error}, we may write:
\begin{equation*}
\mtx{A} = \mtx{C} \mtx{V}^* + \mtx{E} = \mtx{C}\mtx{U}\mtx{R},
\end{equation*}
from which we obtain the modified system:
\begin{equation}
\label{eq:cur_ur_sys2}
\mtx{U}\mtx{R} = \mtx{V}^* + \mtx{C}^{\dagger} \mtx{E},
\end{equation}
where $\mtx{E}$ can be obtained from $\mtx{E} = \mtx{A} - \mtx{Q}\mtx{R}$ once the
partial rank $k$ \QR~factorization has been performed. One can then obtain matrix
$\mtx{U}$ from a least squares problem
corresponding to \eqref{eq:cur_ur_sys2}.
For \CUR, the storage requirement for an $m\times n$ matrix is $mk + kn + k^2$, 
noting that the $k \times k$ matrix $\mtx{U}$ is not a diagonal.

\subsection{Computational and storage costs}

All the algorithms discussed in this section have asymptotic cost $O(mnk)$.
The dominant part of the computation is almost always the initial rank-$k$
QR factorization. All subsequent computations involve only matrices of sizes
$m\times k$ or $k\times n$, and have cost $O((m+n)k^{2})$.
In terms of memory storage, when the matrix $\mtx{A}$ is dense, the two \ID~decompositions
of $\mtx{A}$ require the least space, followed by the \SVD, and then the \CUR.
However, if $\mtx{A}$ is a sparse matrix and sparse storage format is used
for the factor matrices, the \ID~and \CUR~decompositions can be stored more efficiently. 
Note that the factors $\mtx{C}$ and $\mtx{R}$ will be sparse if $\mtx{A}$ is sparse 
and so in the sparse case, the \CUR~storage will in general be minimal amongst all the 
factorizations.

\begin{figure*}[!ht]
\centering
\begin{minipage}[c]{14cm}
\begin{algorithm}[H]
\SetKwInOut{Input}{Input}
\SetKwInOut{Output}{Output}
\caption{A rank $k$ \ID~decomposition \label{algo:rank_k_id}}
\Input{$\mtx{A}\in\mathbb{C}^{m\times n}$ and parameter $k < \min(m,n)$.}
\Output{\mtx{A} column index set $J$ and a matrix
$\mtx{V}\in\mathbb{C}^{n\times k}$
such that $\mtx{A} \approx \mtx{A}(:,J(1:k)) \mtx{V}^{*}$.}
\BlankLine
Perform a rank $k$ column pivoted \QR~factorization to get
$\mtx{A} \mtx{P} = \mtx{Q}_1 \mtx{S}_1$\;
\Indp
\Indm define the ordered index set $J$ via $\mtx{I}(:,J) = \mtx{P}$\;
partition $\mtx{S}_1$: $\mtx{S}_{11} = \mtx{S}_1(:,1:k)$,
$\mtx{S}_{12} = \mtx{S}_1(:,k+1:n)$\;
$\mtx{V} =
\mtx{P} \begin{bmatrix} \mtx{I}_k & \mtx{S}_{11}^{-1} \mtx{S}_{12} \end{bmatrix}^*$\;
\end{algorithm}
\end{minipage}
\end{figure*}

\begin{figure*}[!ht]
\centering
\begin{minipage}[c]{14cm}
\begin{algorithm}[H]
\SetKwInOut{Input}{Input}
\SetKwInOut{Output}{Output}
\caption{A rank $k$ two sided \ID~decomposition \label{algo:rank_k_tsid}}
\Input{$\mtx{A}\in\mathbb{C}^{m\times n}$ and parameter $k < \min(m,n)$.}
\Output{A column index set $J$, a row index set $I$ and a matrices
$\mtx{V}\in\mathbb{C}^{n\times k}$ and $\mtx{W}\in\mathbb{C}^{m \times k}$
such that $\mtx{A} \approx \mtx{W} \mtx{A}(I(1:k),J(1:k)) \mtx{V}^{*}$.}
\BlankLine
Perform a one sided rank $k$ \ID~of $\mtx{A}$ so that
$\mtx{A} \approx \mtx{C} \mtx{V}^{*}$ where $\mtx{C} = \mtx{A}(:,J(1:k))$\;
Perform a full rank \ID~on $\mtx{C}^*$ so that
$\mtx{C}^{*} = \mtx{C}^*(:,I(1:k)) \mtx{W}^{*}$\;
\end{algorithm}
\end{minipage}
\end{figure*}

\begin{figure*}[!ht]
\centering
\begin{minipage}[c]{14cm}
\begin{algorithm}[H]
\SetKwInOut{Input}{Input}
\SetKwInOut{Output}{Output}
\caption{A rank $k$ \CURID~algorithm \label{algo:rank_k_cur}}
\Input{$\mtx{A}\in\mathbb{C}^{m\times n}$ and parameter $k < \min(m,n)$.}
\Output{Matrices $\mtx{C} \in \mathbb{C}^{m \times k}$, $\mtx{R} \in \mathbb{C}^{k \times n}$, and
$\mtx{U} \in \mathbb{C}^{k \times k}$ (such that $\mtx{A} \approx \mtx{C} \mtx{U} \mtx{R}$).}
\BlankLine
Construct a rank $k$ two sided \ID~of $\mtx{A}$ so that
$\mtx{A} \approx \mtx{W} \mtx{A}(I(1:k),J(1:k)) \mtx{V}^{*}$\;
Construct matrices $\mtx{C} = \mtx{A}(:,J(1:k))$ and $\mtx{R} = \mtx{A}(I(1:k),:)$\;
Construct matrix $\mtx{U}$ via $\mtx{U} = \mtx{V}^{*} \mtx{R}^{\dagger}$\;
\end{algorithm}
\end{minipage}
\end{figure*}

\section{Efficient randomized algorithms}
\label{sec:randalg}

The computational costs of the algorithms described in Section \ref{sec:detalg}
tend to be dominated by the cost of performing the initial $k$ steps of a
column pivoted QR-decomposition of $\mtx{A}$ (at least when the rank $k$ is substantially
smaller than the dimensions $m$ and $n$ of the matrix). This initial step can often
be accelerated substantially by exploiting techniques based on randomized projections.
These ideas were originally proposed in \cite{2006_martinsson_random1_orig,sarlos2006improved}, and
further developed in \cite{2009_szlam_power,2007_woolfe_liberty_rokhlin_tygert,Liberty18122007,MR2806637}.

Observe that in order to compute the column \ID~of a matrix, all we need is to
know the linear dependencies among the columns of $\mtx{A}$. When the singular values
of $\mtx{A}$ decay reasonably rapidly, we can determine these linear dependencies by
processing a matrix $\mtx{Y}$ of size $\ell \times n$, where $\ell$ can be much smaller
than $n$. The rows of $\mtx{Y}$ consist of random linear combinations of
the rows of $\mtx{A}$,
and as long as the number of samples $\ell$ is a ``little bit'' larger than the
rank $k$, highly accurate approximations result. In this section, we provide a
brief description of how randomization can be used to accelerate the ID and the CUR
factorizations, for details and a rigorous analysis of sampling errors, see
\cite{MR2806637}.

The techniques in this section are all designed to compute a one-sided \ID.
Once this factorization is available, either a two-sided \ID, or a
\CUR~decomposition can easily be obtained using the techniques outlined
in Section \ref{sect:CUR_ID}.

\subsection{A basic randomized algorithm}
\label{sec:randbasic}
Suppose that we are given an $m\times n$ matrix $\mtx{A}$ and seek to compute
a column ID, a two-sided ID, or a CUR decomposition. As we saw in Section
\ref{sec:detalg}, we can perform this task as long as we can identify an
index vector $J = [J_{\rm skel},\,J_{\rm res}]$ and a basis matrix
$\mtx{V} \in \mathbb{C}^{n\times k}$ such that
$$
\begin{array}{cccccccccc}
\mtx{A} &=& \mtx{A}(:,J_{\rm skel}) & \mtx{V}^{*} &+& \mtx{E}\\
m\times n && m\times k & k\times n && m\times n
\end{array}
$$
where $\mtx{E}$ is small. In Section \ref{sec:detalg}, we found $J$ and $\mtx{V}$
by performing a column pivoted QR factorization of $\mtx{A}$. In order to do this via
randomized sampling, we first fix a small over-sampling parameter $p$, say $p=10$
for now (see Remark \ref{remark:p} for details). Then draw a $(k+p)\times m$
random matrix $\mtx{\Omega}$ whose entries are i.i.d.~standardized Gaussian random
variables, and form the \textit{sampling matrix}
\begin{equation}
\label{eq:Y=OmegaA}
\begin{array}{cccccccc}
\mtx{Y} &=& \mtx{\Omega} & \mtx{A}.\\
(k+p)\times n && (k+p)\times m & m\times n
\end{array}
\end{equation}
One can prove that with high probability, the space spanned by the rows of $\mtx{Y}$
contains the dominant $k$ right singular vectors of $\mtx{A}$ to high accuracy. This
is precisely the property we need in order to find both the vector $J$ and the
basis matrix $\mtx{V}$. All we need to do is to perform $k$ steps of a column pivoted
QR factorization of the sample matrix to form a partial QR factorization
$$
\begin{array}{cccccccc}
\mtx{Y}(:,J) &\approx& \mtx{Q} & \mtx{S}.\\
(k+p)\times n && (k+p)\times k & k\times n
\end{array}
$$
Then compute the matrix of expansion coefficients via $\mtx{T} = \mtx{S}(1:k,1:k)^{-1}\mtx{S}(1:k,(k+1):n)$,
or a stabilized version, as described in Section \ref{sec:onesideID}.
The matrix $\mtx{V}$ is formed from $\mtx{T}$ as before,
resulting in Algorithm \ref{algo:rank_k_rand_id}.
The asymptotic cost of Algorithm \ref{algo:rank_k_rand_id} is $O(mnk)$,
just like the algorithms described in Section \ref{sec:detalg}.
However, substantial practical gain is achieved due to the fact that the
matrix-matrix multiplication is much faster than a column-pivoted QR factorization.
This effect gets particularly pronounced when a matrix is very large and is
stored either out-of-core, or on a distributed memory machine.

\begin{figure*}
\centering
\begin{minipage}[c]{14cm}
\begin{algorithm}[H]
\SetKwInOut{Input}{Input}
\SetKwInOut{Output}{Output}
\caption{A randomized rank $k$ \ID~Decomposition \label{algo:rank_k_rand_id}}
\Input{$\mtx{A}\in\mathbb{C}^{m\times n}$, a rank parameter $k < \min(m,n)$,
and an oversampling parameter $p$.}
\Output{A column index set $J$ and a matrix
$\mtx{V}\in\mathbb{C}^{n\times k}$
(such that $\mtx{A} \approx \mtx{A}(:,J(1:k)) \mtx{V}^{*}$).}
\BlankLine
Construct a random matrix $\mtx{\Omega} \in \mathbb{R}^{(k+p) \times m}$
with i.i.d.~Gaussian entries\;
Construct the sample matrix $\mtx{Y} = \mtx{\Omega} \mtx{A}$\;
Perform full pivoted \QR~factorization on $\mtx{Y}$ to get:
$\mtx{Y} \mtx{P} = \mtx{Q} \mtx{S}$\;
Remove $p$ columns of $\mtx{Q}$ and $p$ rows of $\mtx{S}$ to construct
$\mtx{Q}_1$ and $\mtx{S}_1$\;
Define the ordered index set $J$ via $\mtx{I}(:,J) = \mtx{P}$\;
Partition $\mtx{S}_1$: $\mtx{S}_{11} = \mtx{S}_1(:,1:k)$,
$\mtx{S}_{12} = \mtx{S}_1(:,k+1:n)$\;
$\mtx{V} = \mtx{P} \begin{bmatrix} \mtx{I}_k & \mtx{S}_{11}^{-1} \mtx{S}_{12} \end{bmatrix}^*$\;
\end{algorithm}
\end{minipage}
\end{figure*}

\begin{remark}
\label{remark:p}
Careful mathematical analysis is available to guide the choice of
the over-sampling parameter $p$ \cite{MR2806637}. However, in practical
applications, choosing $p=10$ is almost always more than sufficient. If a very close
to optimal skeleton is desired, one could increase the parameter up to
$p=2k$, but this is generally far higher than needed.
\end{remark}

\subsection{An accelerated randomized scheme}
\label{sec:SRFT}

At this point, all algorithms described have asymptotic complexity $O(mnk)$.
Using the randomized projection techniques, we can reduce this to $O(mn\,\log(k) + k^{2}(m+n))$. The idea is to replace the Gaussian randomized matrix $\mtx{\Omega}$ we used in Section
\ref{sec:randbasic} by a random matrix that has enough structure that the matrix-matrix
multiplication (\ref{eq:Y=OmegaA}) can be executed in $O(mn\,\log(k))$ operations.
For instance, one can use a \textit{subsampled random Fourier transform (SRFT)},
which takes the form
\begin{equation}
\label{eq:def_srft}
\begin{array}{cccccccccccccccc}
\mtx{\Omega} &=& \sqrt{\frac{m}{\ell}} & \mtx{R} & \mtx{F} & \mtx{D}\\
\ell \times m &&& \ell \times m & m\times m & m\times m
\end{array}
\end{equation}
where $\mtx{D}$ is an $m \times m$ diagonal matrix whose entries are
independent random variables uniformly distributed on the complex unit circle;
where $\mtx{F}$ is the $m \times m$ unitary discrete Fourier transform, whose
entries take the values $\mtx{F}(p,q) = m^{-1/2} \, e^{-2\pi i (p-1)(q-1)/m}$
for $p, q = 1, 2, \dots, m$;
and where $\mtx{R}$ is an $\ell \times m$ matrix that samples $\ell$ coordinates
from $m$ uniformly at random (i.e., its $\ell$ rows are drawn randomly
without replacement from the rows of the $m \times m$ identity matrix).

When using an SRFT, a larger number of samples is sometimes required to
attain similar accuracy. In practice $\ell = 2k$ is almost always sufficient,
see \cite[Sec.~4.6]{MR2806637}.

Replacing lines 1 and 2 in Algorithm \ref{algo:rank_k_rand_id} by the SRFT
(\ref{eq:def_srft}) reduces the cost of executing these lines to $O(mn\,\log(k))$,
assuming $\ell = 2k$. The remaining operations have complexity $O(k^{2}(m+n))$.

\subsection{An accuracy enhanced scheme}

The randomized sampling schemes described in Sections \ref{sec:randbasic} and
\ref{sec:SRFT} are roughly speaking as accurate as the techniques based on
a column pivoted QR factorization described in Section \ref{sec:detalg} as
long as the singular values of $\mtx{A}$ exhibit reasonable decay. For the case
where the singular values decay slowly (as often happens in data mining and
analysis of statistical data, for instance), the accuracy deteriorates. However,
high accuracy can easily be restored by slightly modifying the construction of
the sampling matrix $\mtx{Y}$. The idea of the power sampling scheme is roughly 
to choose a small integer $q$ (say $q=1$ or $q=2$), and then form the sampling matrix via
\begin{equation}
\label{eq:Ypower}
\mtx{Y} = \mtx{\Omega}\,\mtx{A}\,\bigl(\mtx{A}^{*}\mtx{A})^{q}.
\end{equation}
The point here is that if $\mtx{A}$ has singular values $\{\sigma_{j}\}_{j=1}^{\min(m,n)}$,
then the singular values of $\mtx{A}\,\bigl(\mtx{A}\mtx{A}^{*})^{q}$
are $\{\sigma_{j}^{2q+1}\}_{j=1}^{\min(m,n)}$,
which means that the larger singular values are weighted much more heavily versus
the lower ones.

For computational efficiency, note that the evaluation of (\ref{eq:Ypower}) should be
done by successive multiplications of $\mtx{A}$ and $\mtx{A}^{*}$, so that line 2 in Algorithm
\ref{algo:rank_k_rand_id} gets replaced by:
\begin{center}
\begin{minipage}{100mm}
\begin{tabbing}
\= \hspace{10mm} \= \hspace{5mm} \= \hspace{5mm} \= \hspace{5mm} \\ \kill
\> (2a) \> $\mtx{Y} = \mtx{\Omega} \mtx{A}$\\
\> (2b) \> \textbf{for} $i = 1:q$\\
\> (2c) \> \> $\mtx{Y} \leftarrow \mtx{Y} \mtx{A}^*$\\
\> (2d) \> \> $\mtx{Y} \leftarrow \mtx{Y} \mtx{A}$\\
\> (2e) \> \textbf{end}
\end{tabbing}
\end{minipage}
\end{center}
In cases where very high computational precision is required (higher than
$\epsilon_{\rm mach}^{1/(2q+1)}$, where $\epsilon_{\rm mach}$ is the machine
precision), one typically needs to orthonormalize the sampling matrix in
between multiplications, resulting in:
\begin{center}
\begin{minipage}{100mm}
\begin{tabbing}
\= \hspace{10mm} \= \hspace{5mm} \= \hspace{5mm} \= \hspace{5mm} \\ \kill
\> (2a) \> $\mtx{Y} = \mtx{\Omega} \mtx{A}$\\
\> (2b) \> \textbf{for} $i = 1:q$\\
\> (2c) \> \> $\mtx{Y} \leftarrow \texttt{orth}(\mtx{Y}) \mtx{A}^*$\\
\> (2d) \> \> $\mtx{Y} \leftarrow \texttt{orth}(\mtx{Y}) \mtx{A}$\\
\> (2e) \> \textbf{end}
\end{tabbing}
\end{minipage}
\end{center}
where $\texttt{orth}$ refers to orthonormalization of the \textit{rows},
without pivoting. In other words, if $\mtx{Q} = \texttt{orth}(\mtx{Y})$, then $\mtx{Q}$
is a matrix whose rows form an orthonormal basis for the rows of $\mtx{Y}$.

The asymptotic cost of the algorithm described in this section is
$O((2q+1)mnk + k^{2}(m+n))$.

\begin{remark}
It is to the best of our knowledge not possible to accelerate the accuracy
enhanced technique described in this section to $O(mn\,\log(k))$ complexity.
\end{remark}

%
%

\newpage
\section{Numerics}
\label{sec:num}
In this section, we present numerical comparisons between the proposed \CURID~algorithm,
and previously proposed schemes, specifically those implemented in the rCUR 
package \cite{rCUR} and the algorithm from \cite{2014arXiv1407.5516S}.

We first compare the proposed method for computing the \CUR~decomposition
(Algorithm \ref{algo:rank_k_cur}) against four existing \CUR~algorithms,
one based on the newly proposed DEIM-CUR method as described in \cite{2014arXiv1407.5516S}
and three algorithms as implemented in the rCUR package. We first use the full SVD
with each algorithm:
\begin{enumerate}
\item[\CUR-H] The full \SVD~is computed and provided to rCUR, and then 
the ``highest ranks'' option is chosen. This generally offers good performance 
and reasonable runtime in our experiments.
\item[\CUR-1] The full \SVD~is computed and provided to rCUR, and then the
``orthogonal top scores'' option is chosen. This is an expensive scheme
that we believe gives the best performance in rCUR for many 
matrix types. However, when the decay of singular values of the input matrix 
is very rapid or abrupt (as in the example in Figure \ref{fig:setIV} below),
the scheme performs poorly. This scheme is also considerably slower than the others. 
\item[\CUR-2] The full \SVD~is computed and provided to DEIM-CUR. 
This generally offers good performance and reasonable runtime in our experiments.
\item[\CUR-3] The full \SVD~is computed and provided to rCUR, and then the
``top scores'' option is chosen. This procedure reflects a common way that 
``leverage scores'' are used. It has slightly worse performance than \CUR-1 
and \CUR-H in our experiments but better runtime.
\end{enumerate}
Our first set of test matrices (``Set 1'') involves 
matrices $\mtx{A}$ of size $1000\times 3000$,
of the form $\mtx{A} = \mtx{U}\,\mtx{D}\,\mtx{V}^{*}$ where $\mtx{U}$ and $\mtx{V}$
are random orthonormal matrices, and $\mtx{D}$ is a
diagonal matrix with entries that are logspaced between $1$ and
$10^{b}$, for $b = -2,\,-4,\,-6$.
The second set (``Set 2'') are simply the transposes of the matrices in Set 1 (so these are
matrices of size $3000\times 1000$).
Figure \ref{fig:setI_and_II} plots the median relative errors in the spectral 
norm between the matrix
$\mtx{A}$ and the corresponding factorization 
(with the error defined as 
$E = \frac{\|\hat{\mtx{A}}_k - \mtx{A}\|}{\|\mtx{A}\|}$ where 
$\hat{\mtx{A}}_k = \mtx{C} \mtx{U} \mtx{R}$ is the corresponding 
approximation of given rank). We plot median quantities 
collected over $5$ trials. In addition to the four \CUR~algorithms,
we also include plots for the two sided~\ID and the \SVD~of given rank (providing the
optimal approximation). Based on the plots, we make
three conjectures for matrices conditioned similar to those used in this example 
(note that \CUR-1 performs poorly in some of our other experiments):
\begin{itemize}
\item 
The accuracies of \CURID, \CUR-1, and \CUR-2, are all very similar. \CUR-H offers 
slightly worse approximations. 
\item 
The accuracy of \CUR-3 is worse than all other algorithms tested.
\item 
The two-sided \ID~is in every case more accurate than the \CUR-factorizations.
\end{itemize}

Next, in Figure \ref{fig:setIII}, we compare
the performance and runtimes of \CUR-H, \CUR-1, and \CUR-2 algorithms 
with the randomized \SVD~\cite{MR2806637} 
(which gives close results to the true \SVD~of given 
rank but at substantially less cost) and the \CURID~algorithm using the randomized \ID,
as described in this text (using $q=2$ in the power sampling scheme \eqref{eq:Ypower}).
This comparison allows us to test algorithms which can be used in practice on 
large matrices, since they involve randomization. We again use random matrices 
constructed as above whose singular values are logspaced, ranging from 
$10^{0}$ to $10^{-3}$, but of larger size: $2000\times4000$. 
We notice that the performance with all schemes is similar but the runtime with 
the randomized \CURID~algorithm is substantially lower than with the other schemes. 
The runtime of \CUR-1 is substantially greater than of the other 
schemes. The plotted quantities are again medians over $5$ trials.

In Figure \ref{fig:setIV}, we repeat the experiment using the randomized \SVD~with the
two matrices $\mtx{A}_1$ and $\mtx{A}_2$ defined in the preprint \cite{2014arXiv1407.5516S}.
The matrices $\mtx{A}_1,\mtx{A}_2 \in \real^{300,000\times300}$ are constructed as follows:
\begin{equation*}
\mtx{A}_1 = \displaystyle\sum_{j=1}^{10} \frac{2}{j} x_j y^T_j +
\displaystyle\sum_{j=11}^{300} \frac{1}{j} x_j y^T_j \quad \mbox{and} \quad
\mtx{A}_2 = \displaystyle\sum_{j=1}^{10} \frac{1000}{j} x_j y^T_j +
\displaystyle\sum_{j=11}^{300} \frac{1}{j} x_j y^T_j ,
\end{equation*}
where $x$ and $y$ are sparse vectors with random non-negative entries.
One problem with using traditional \CUR~algorithms for these matrices stems
from the fact that the singular values of $\mtx{A}_1$ and $\mtx{A}_2$ decay rapidly. 
Due to this, the performance of \CUR-1 (and of \CUR-3, which we do not show) 
for these examples is poor. It appears that this is because for these schemes, 
the rapid decay of the singular values of the input matrix translates into 
the inversion of ill-conditioned matrices, which adversely effects performance.
On the other hand, \CURID~and \CUR-2 offer similar performance, close to the approximate 
\SVD~results. In Figure \ref{fig:setIV}, we show the medians of relative errors versus 
$k$ over $5$ trials. 

In Figure \ref{fig:set_cur_and_other_algs}, we show comparison between absolute errors 
given by our non-randomized 
and randomized \CURID~algorithms and the truncated \SVD~and \QR~factorizations in 
terms of the square of the Frobenius norm and the spectral norm. 
We use $600\times600$ test matrices, with varying singular value decay, as before.
In particular, we check here if the optimistic bound:
\begin{equation}
\|\mtx{A} - \mtx{C}\mtx{U}\mtx{R} \|_F^2 \leq (1 + \epsilon) \|\mtx{A} - \mtx{A}_k\|_F^2 \quad \mbox{with} \quad \mtx{A}_k = \mtx{U}_k \mtx{\Sigma}_k \mtx{V}_k^*
\end{equation}
from \cite{boutsidis2014optimal} holds with $1 < \epsilon < 2$ for the 
non-randomized \CURID~scheme. For 
$\epsilon \approx 2$ and $k \ll \min(m,n)$ the bound sometimes holds, 
but it does not hold for all $k$. Despite this, we may also 
observe from the bottom row of Figure \ref{fig:set_cur_and_other_algs} that for matrices 
with rapid singular value decay, the \CURID~error in the spectal norm is sometimes lower 
even than that of the truncated \QR.  

In Figure \ref{fig:setV}, we have an image compression experiment,
using \CURID~and \CUR-1,\CUR-2, and \CUR-H with the full \SVD. 
We take two black and white images (of size $350\times507$ and $350\times526$)
and transform the matrix using four levels of the $2 D$ CDF 97 wavelet transform. 
We then threshold the result, leaving a sparse $m\times n$ matrix $\mtx{M}$ with about 
$30\%$ nonzeros (with same dimensions as the original image). 
Then we go on to construct a low rank \CUR~approximation
of this wavelet thresholded matrix (with $k=\min(m,n)/15$)
to further compress the image data. Storing the three
matrices $\mtx{C}$, $\mtx{U}$, and $\mtx{R}$ corresponds to storing about $8$ time less 
nonzeros vs storing $\mtx{M}$. 
To reconstruct the image from this compressed form, we perform the inverse 
CDF 97 WT transform on the matrix product $\mtx{C U R}$, which approximates the wavelet 
thresholded matrix. From the plots, we see that \CURID~produces a $\mtx{U}$ which 
has less rapid singular value decay than the $\mtx{U}$ matrix obtained with the  
\CUR-1 and \CUR-H algorithms. In particular, the reconstructions obtained with 
\CUR-1 are very poor and the \mtx{U} obtained from this scheme has rapidly 
decaying singular values, comparable to those of $\mtx{M}$.

Thus, in each case, we observe comparable or even better performance
with \CURID~than with existing \CUR~algorithms. For large matrices, existing
\CUR~algorithms that rely on the singular vectors must be used in conjunction
with an accelerated scheme for computing approximate singular vectors, such as,
e.g., the randomized method of \cite{MR2806637}, or to use \CURID~with the 
randomized \ID. We find that for random matrices the performance is similar, but 
\CURID~is easier to implement and is generally more efficient.
Also, as in the case of the imaging example we present,
existing \CUR~algorithms suffer from a badly conditioned
$\mtx{U}$ matrix when the original matrix is not well conditioned. The $\mtx{U}$ matrix returned
by the \CURID~algorithm tends to be better conditioned.

Finally, we again remark that optimized codes for the algorithms we propose are available as 
part of the RSVDPACK software package \cite{voronin2015rsvdpack}.

\newpage

\begin{figure*}[h!]
\centerline{
\includegraphics[scale=0.38]{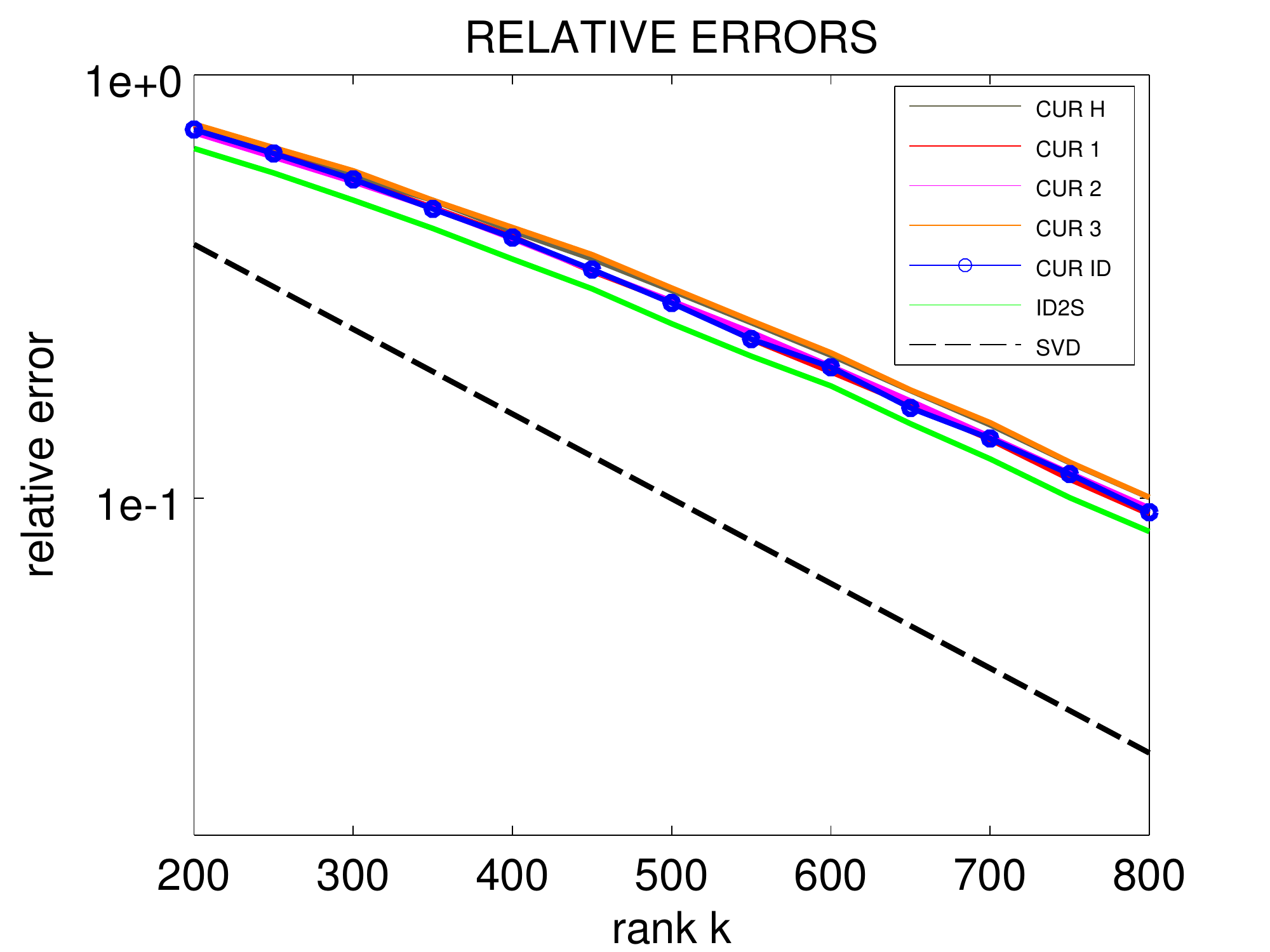}
\includegraphics[scale=0.38]{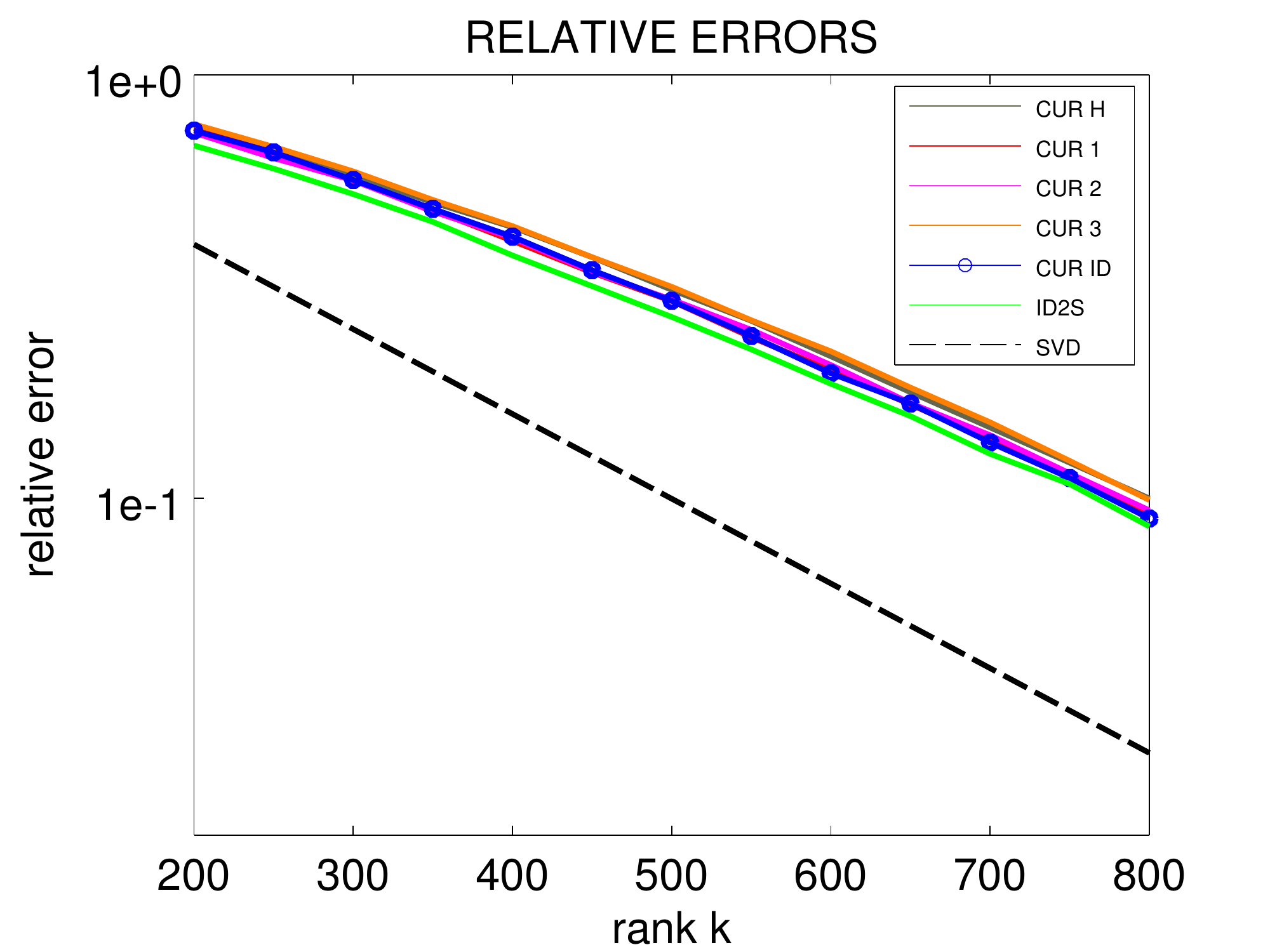}
}
\vspace{1.mm}
\centerline{
\includegraphics[scale=0.38]{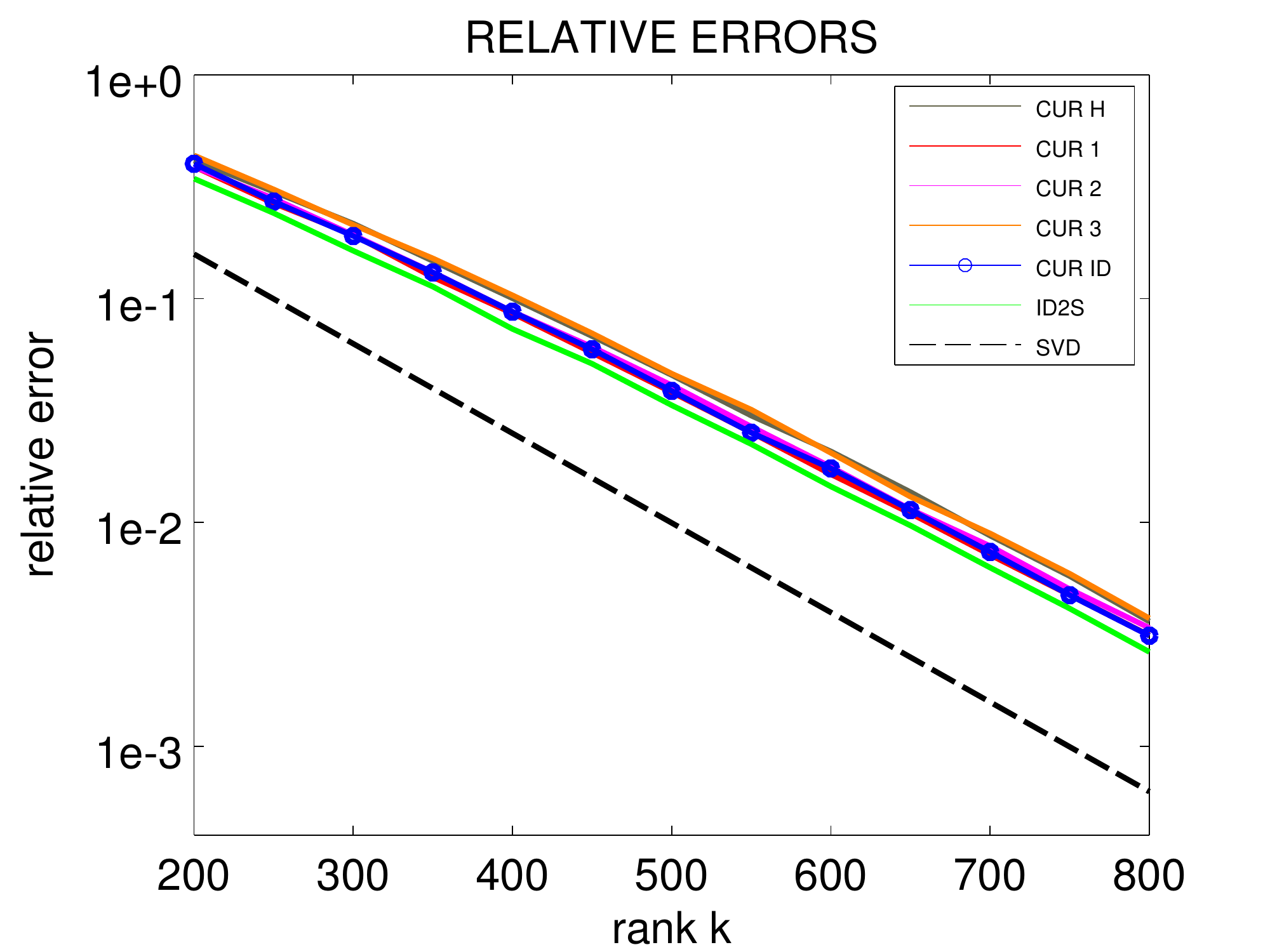}
\includegraphics[scale=0.38]{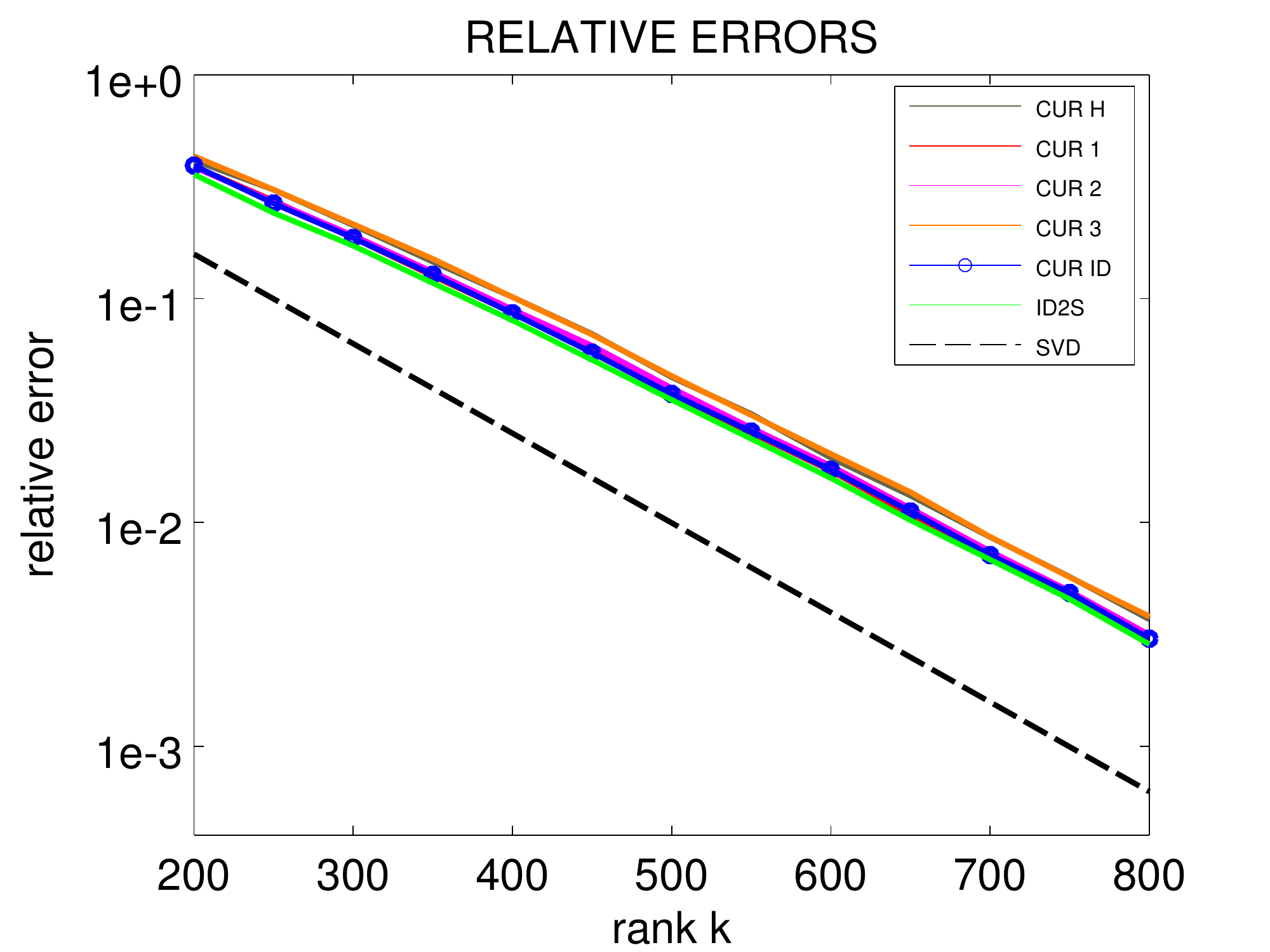}
}
\vspace{1.mm}
\centerline{
\includegraphics[scale=0.38]{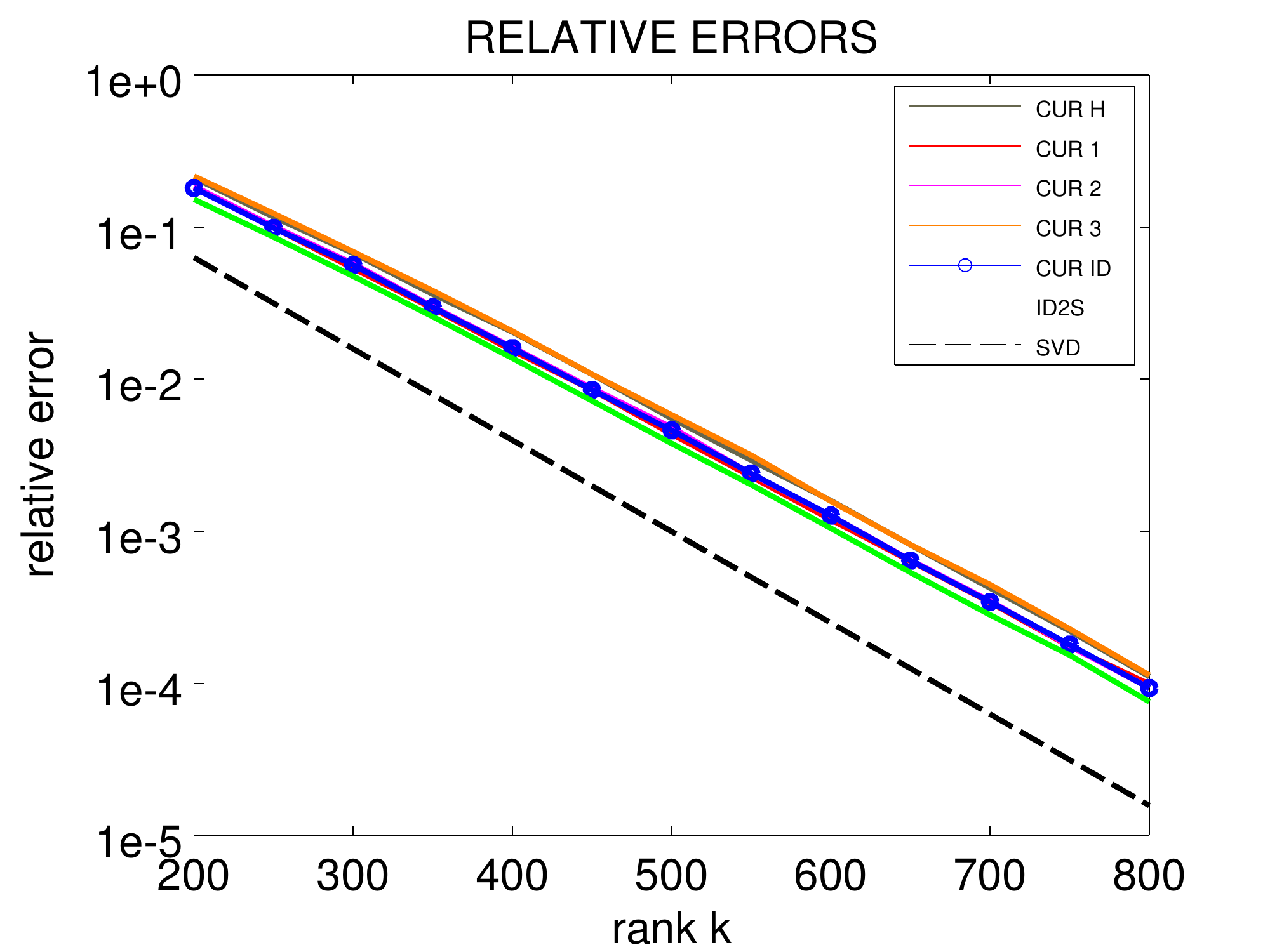}
\includegraphics[scale=0.38]{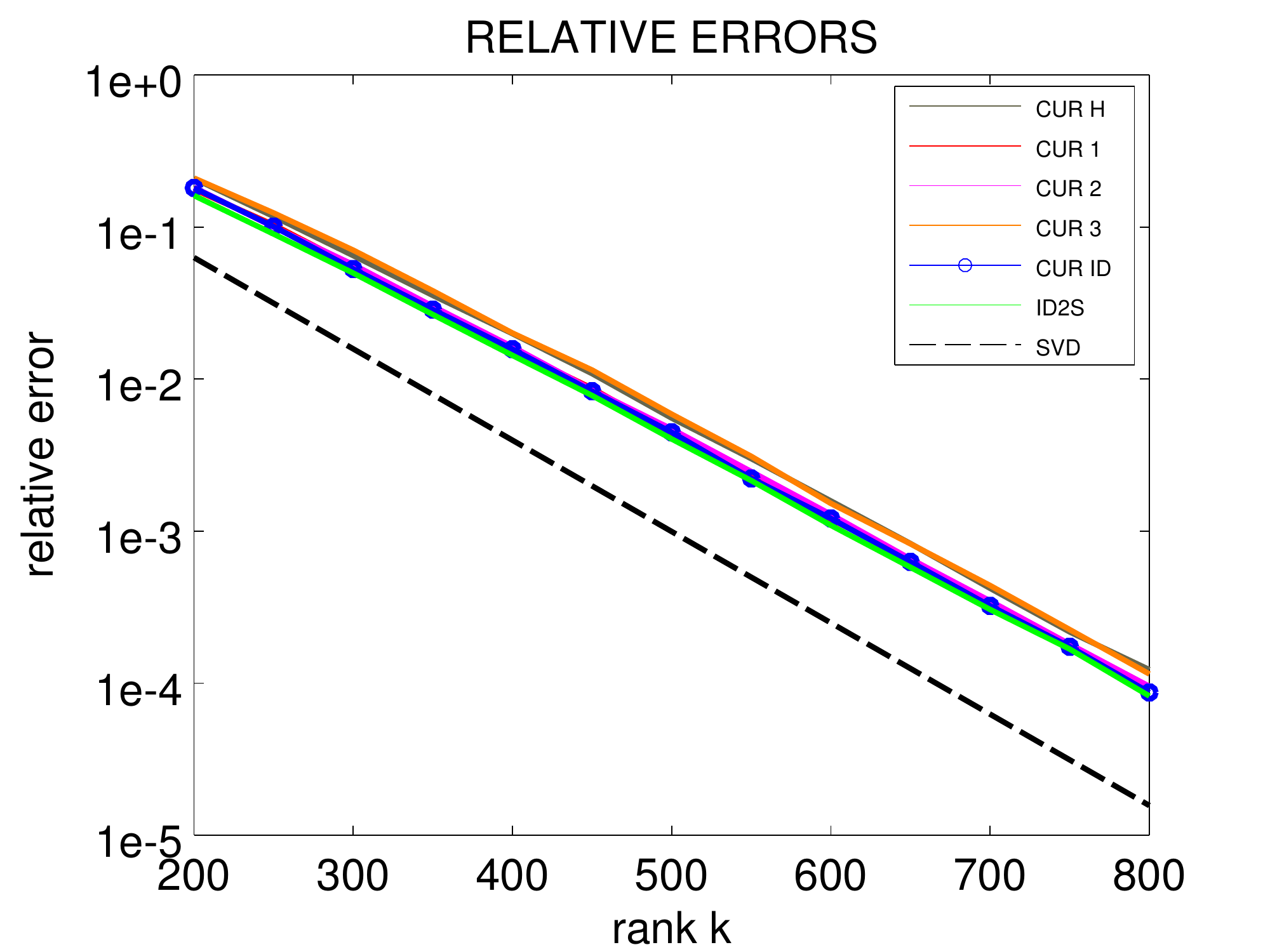}
}
\caption{Relative errors for differently conditioned matrices approximated with various algorithms. Left: fat matrices ($1000\times3000$), right: thin
matrices ($3000\times1000$). Top to bottom: faster drop off of logspaced singular values.}
\label{fig:setI_and_II}
\end{figure*}

\newpage

\begin{figure*}[h!]
\centerline{
\includegraphics[scale=0.30]{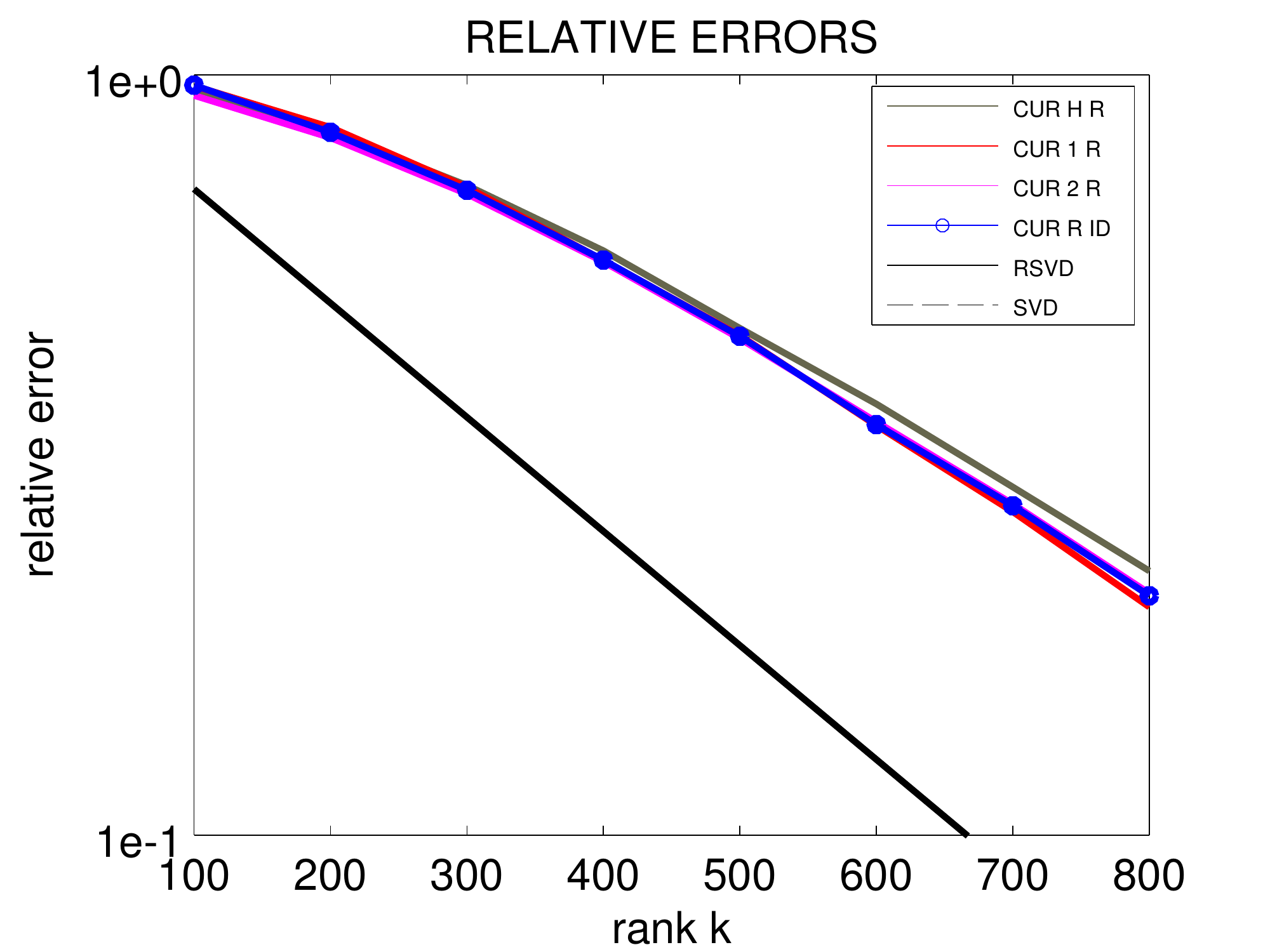}
\includegraphics[scale=0.30]{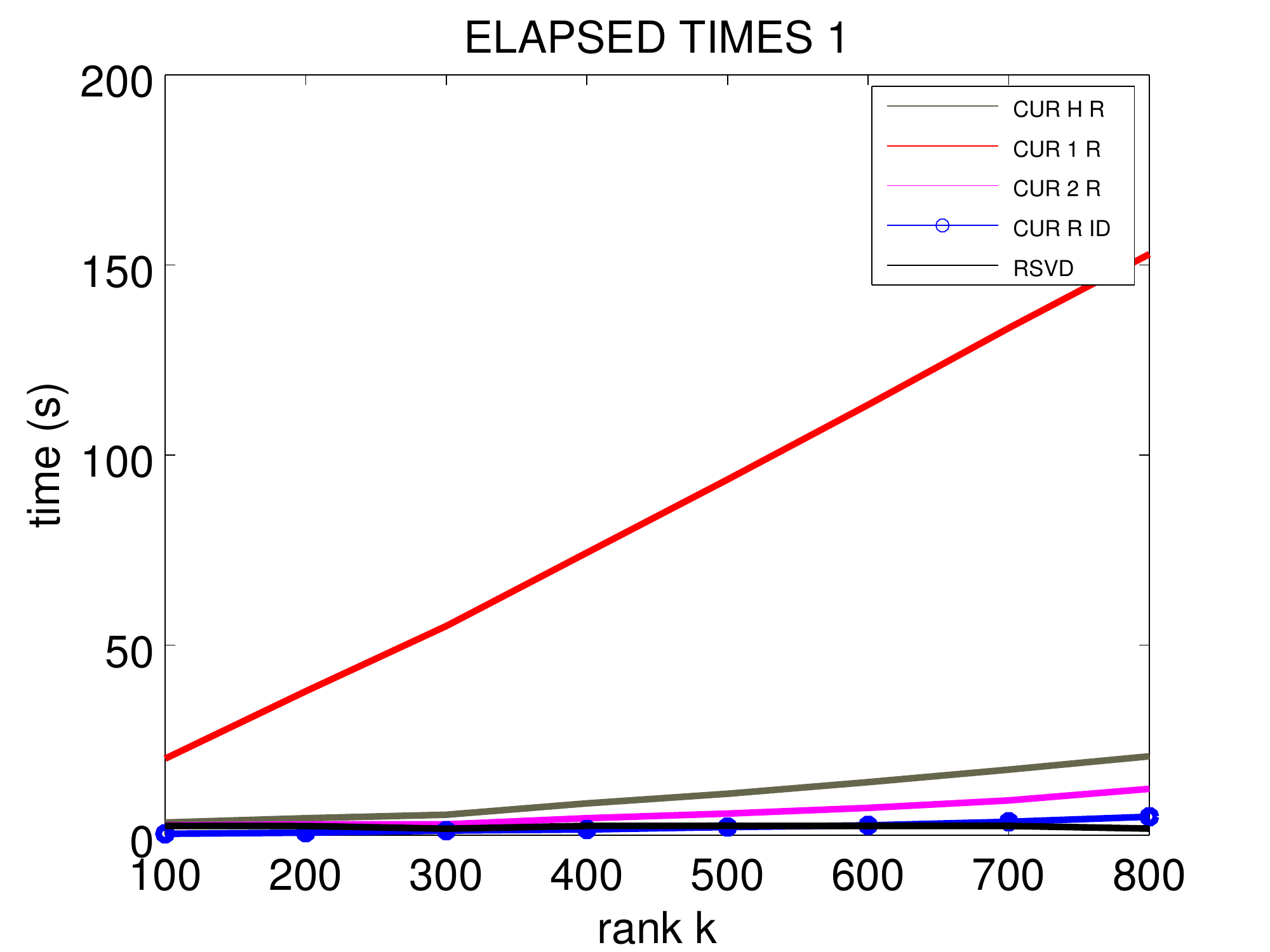}
\includegraphics[scale=0.30]{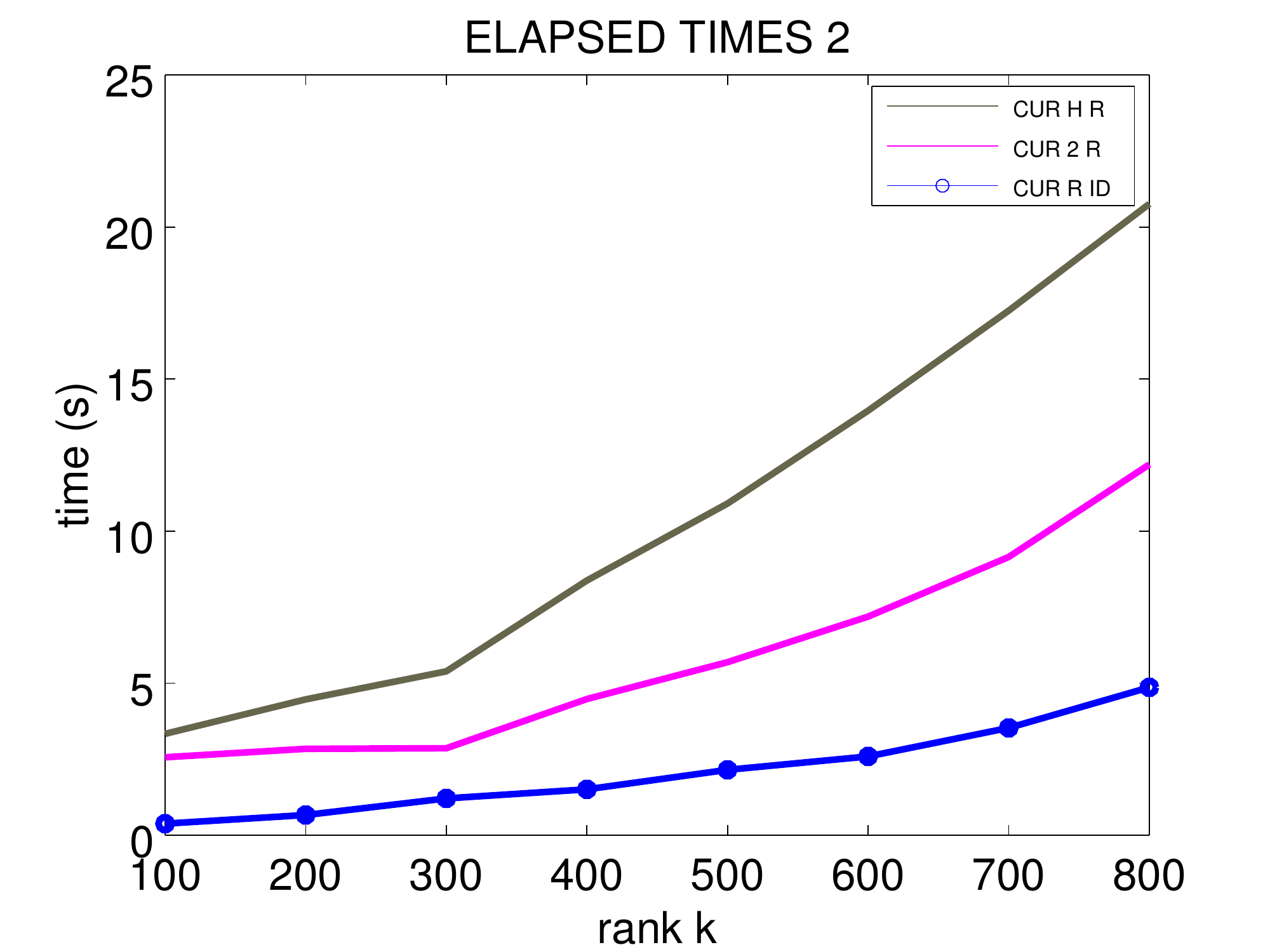}
}
\caption{Relative errors and elapsed times for \CUR-H,\CUR-1,\CUR-2 with randomized \SVD~and
\CURID~with the randomized \ID~using larger matrices of size $2000 \times 4000$. First time 
plot shows runtimes for all algorithms. Second time plot shows runtimes of \CUR-H, 
\CUR-2, and \CURID.}
\label{fig:setIII}
\end{figure*}

\begin{figure*}[h!]
\centerline{
\includegraphics[scale=0.45]{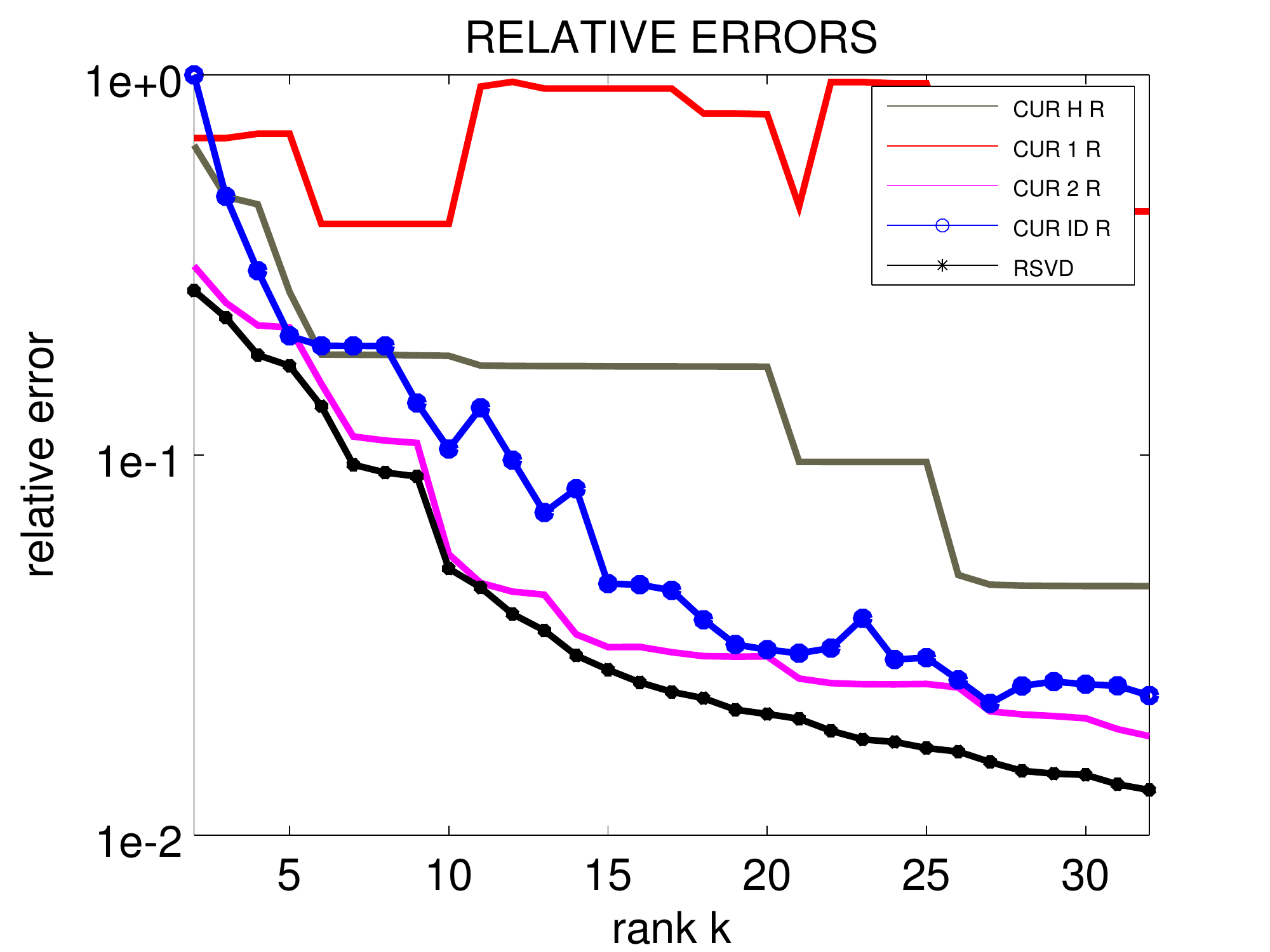}
\includegraphics[scale=0.45]{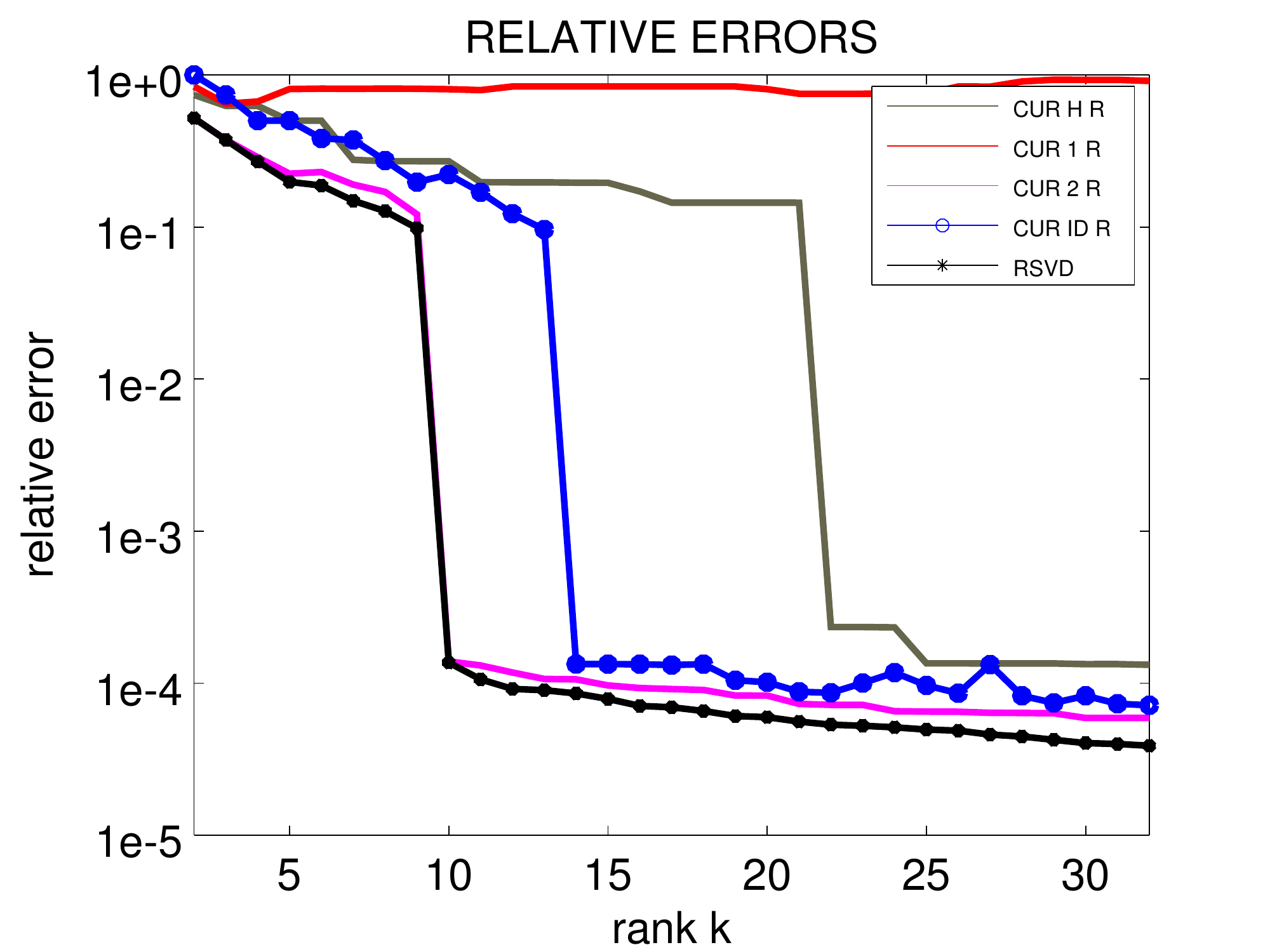}
}
\caption{Relative errors versus $k$ for matrices $\mtx{A}_1$ (left) and 
$\mtx{A}_2$ (right) from \cite{2014arXiv1407.5516S} approximated using 
\CUR-H,\CUR-1,\CUR-2 with randomized \SVD~and \CURID~with the randomized \ID.}
\label{fig:setIV}
\end{figure*}

\newpage

\begin{figure*}[h!]
\centerline{
\includegraphics[scale=0.22]{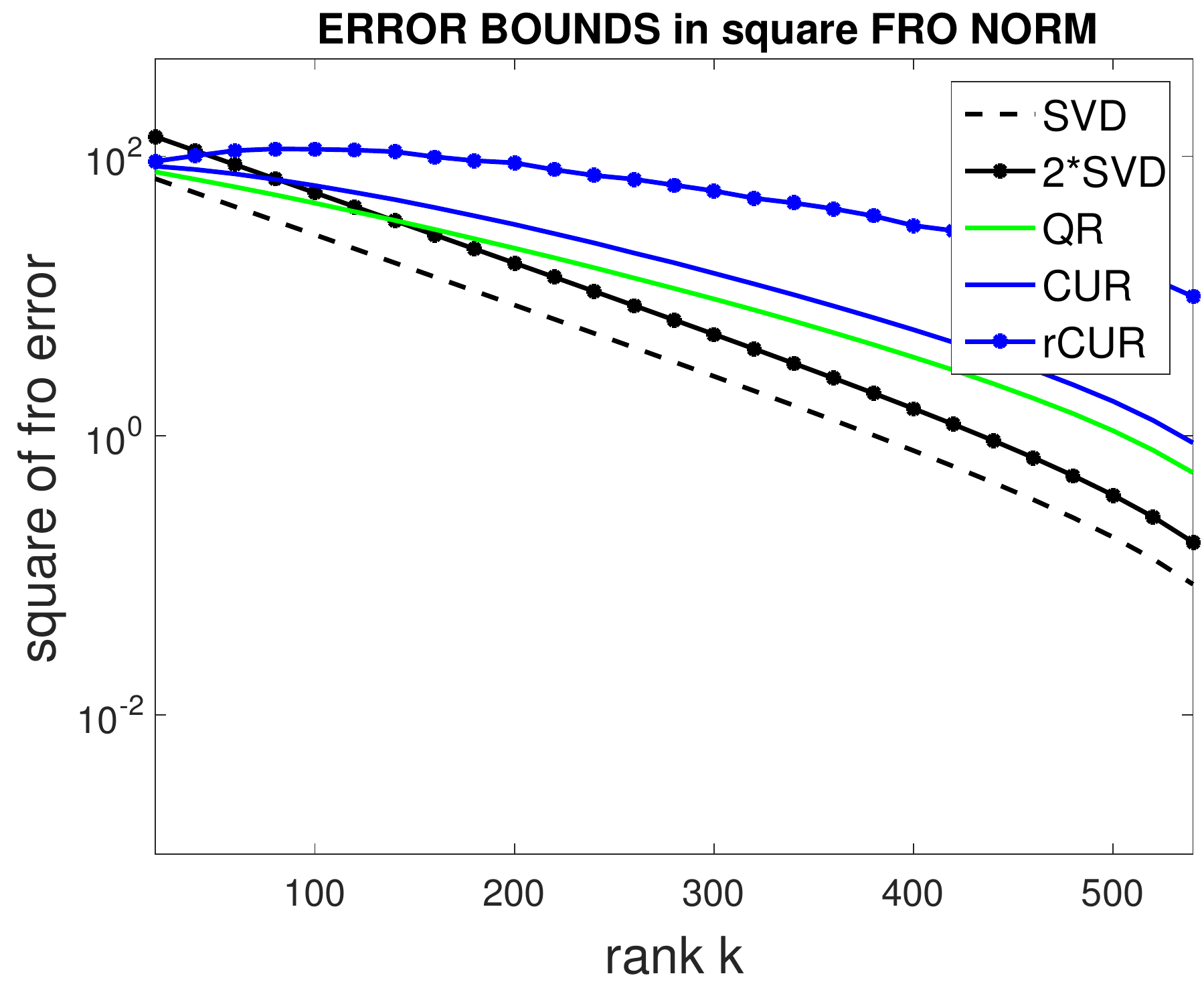}
\includegraphics[scale=0.22]{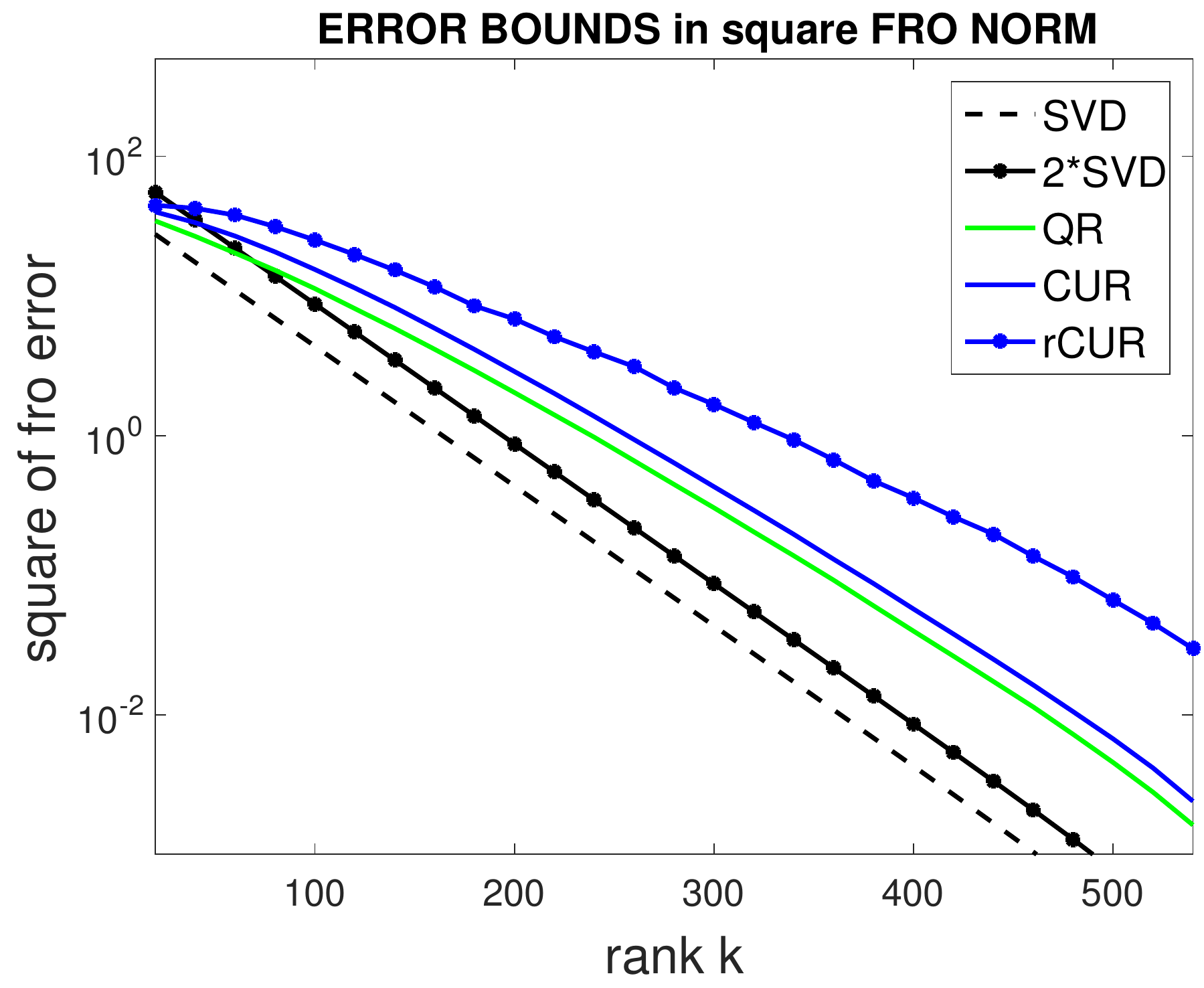}
\includegraphics[scale=0.22]{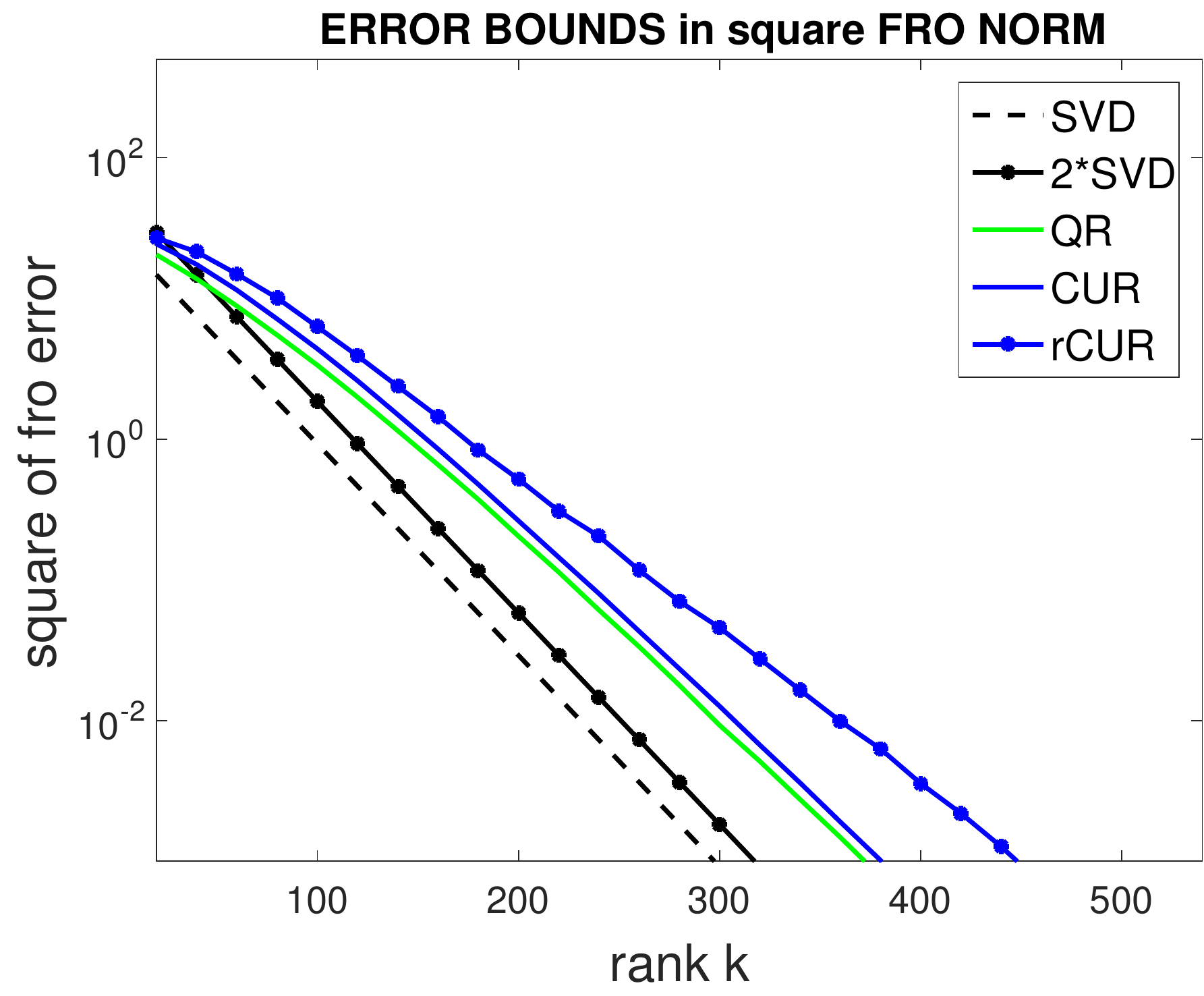}
}
\centerline{
\includegraphics[scale=0.22]{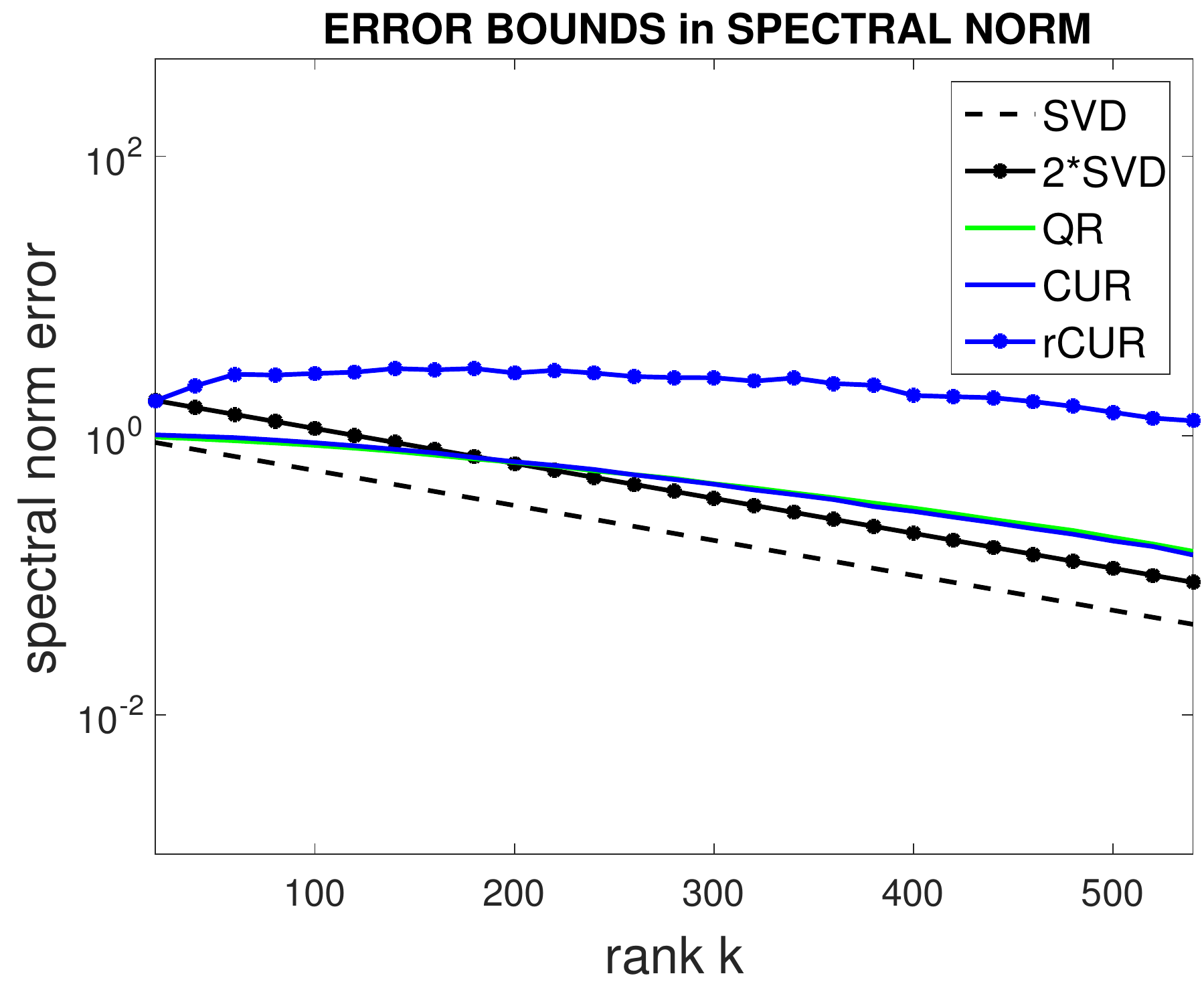}
\includegraphics[scale=0.22]{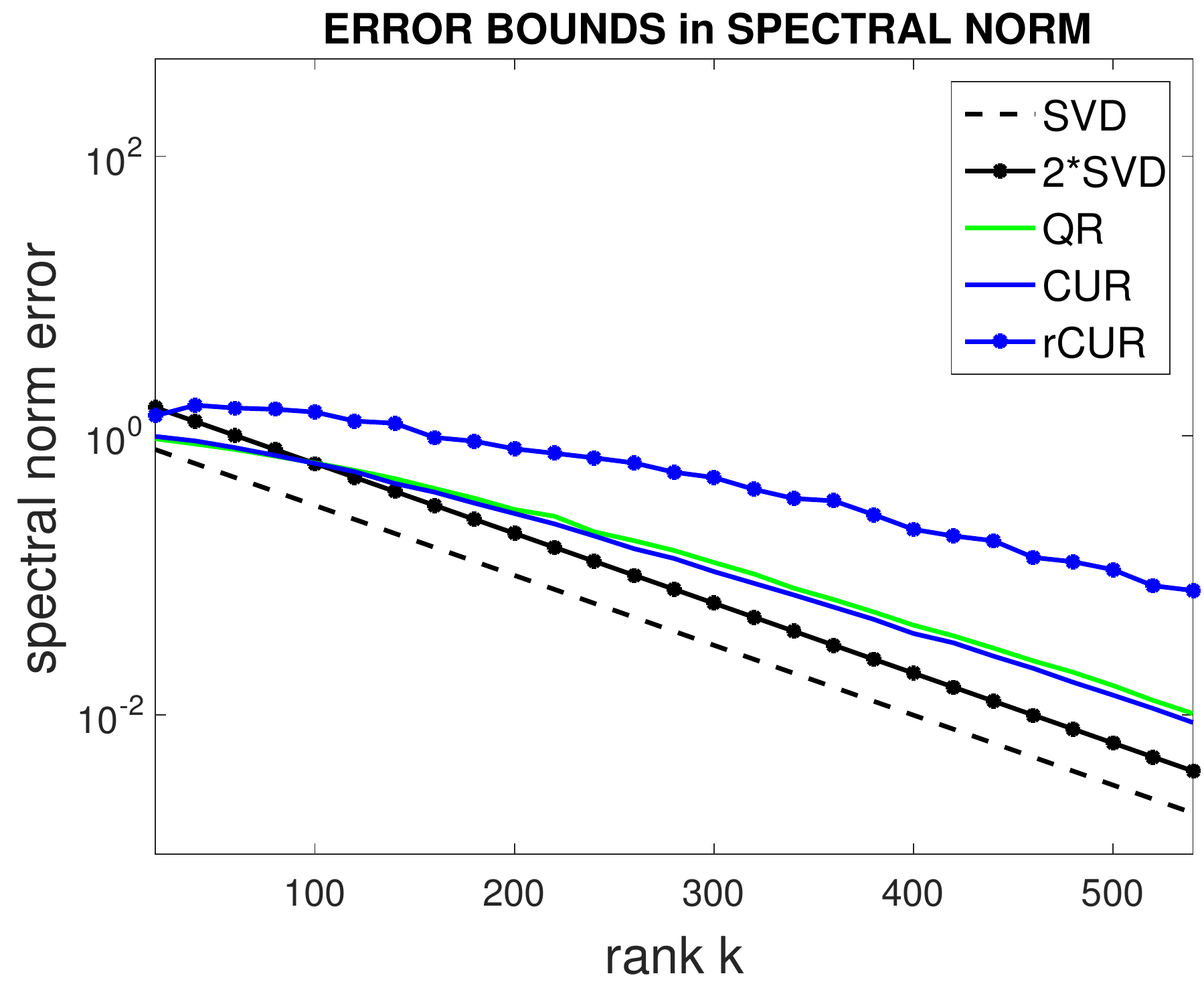}
\includegraphics[scale=0.22]{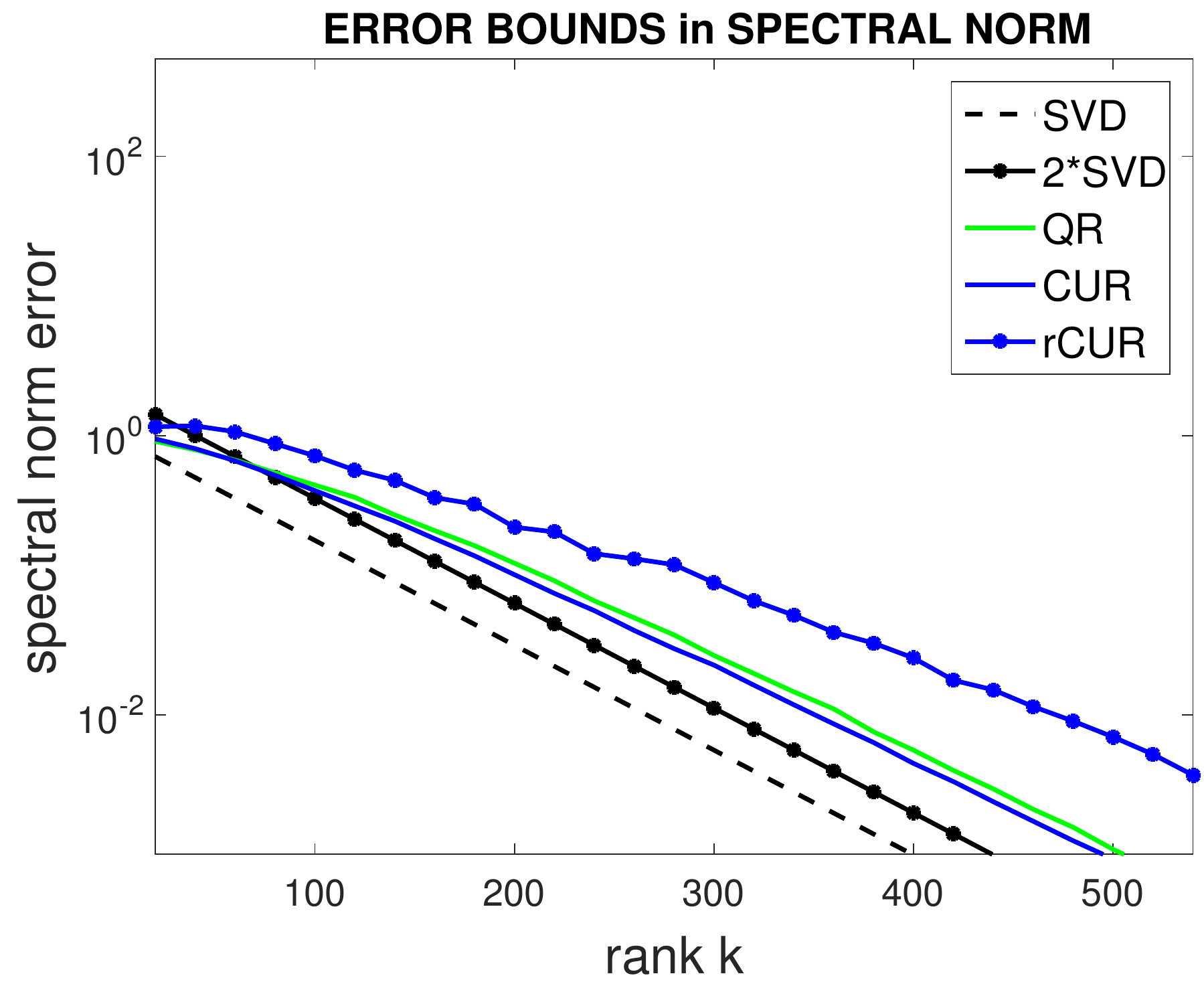}
}
\caption{Comparison of absolute error bounds for rank $k$ \CURID~and \CURID~with 
randomization in comparison to truncated rank $k$ \SVD~and truncated \QR~decompositions 
in terms of square Frobenius norm (top) and spectral norm (bottom) for matrices with 
singular values distributed on a logarithmic scale between 
$1$ and $10^{-b}$ with $b=1.5,3,4.5$.}
\label{fig:set_cur_and_other_algs}
\end{figure*}

\begin{figure*}[h!]
\centerline{
\includegraphics[scale=0.211]{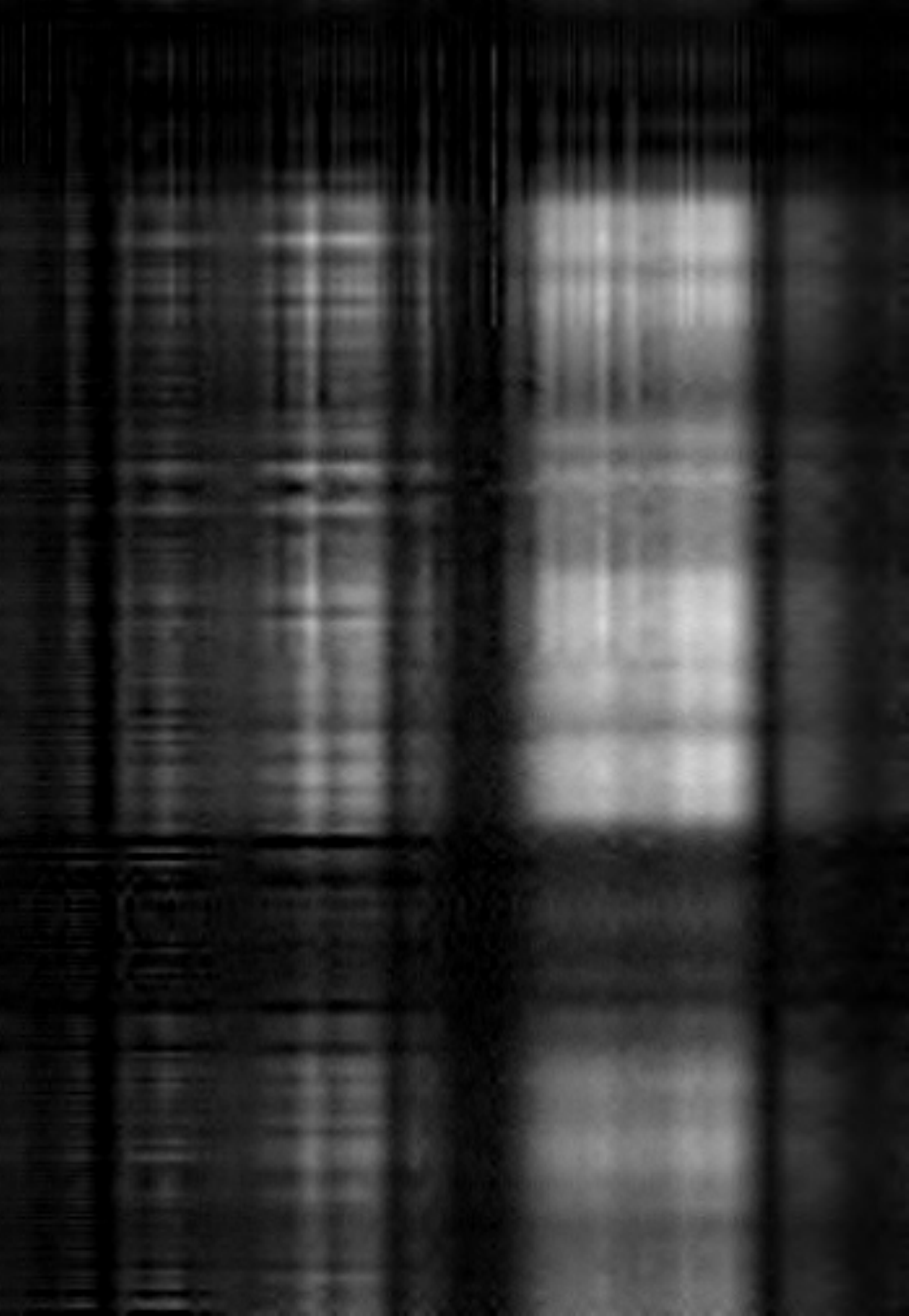}
\includegraphics[scale=0.211]{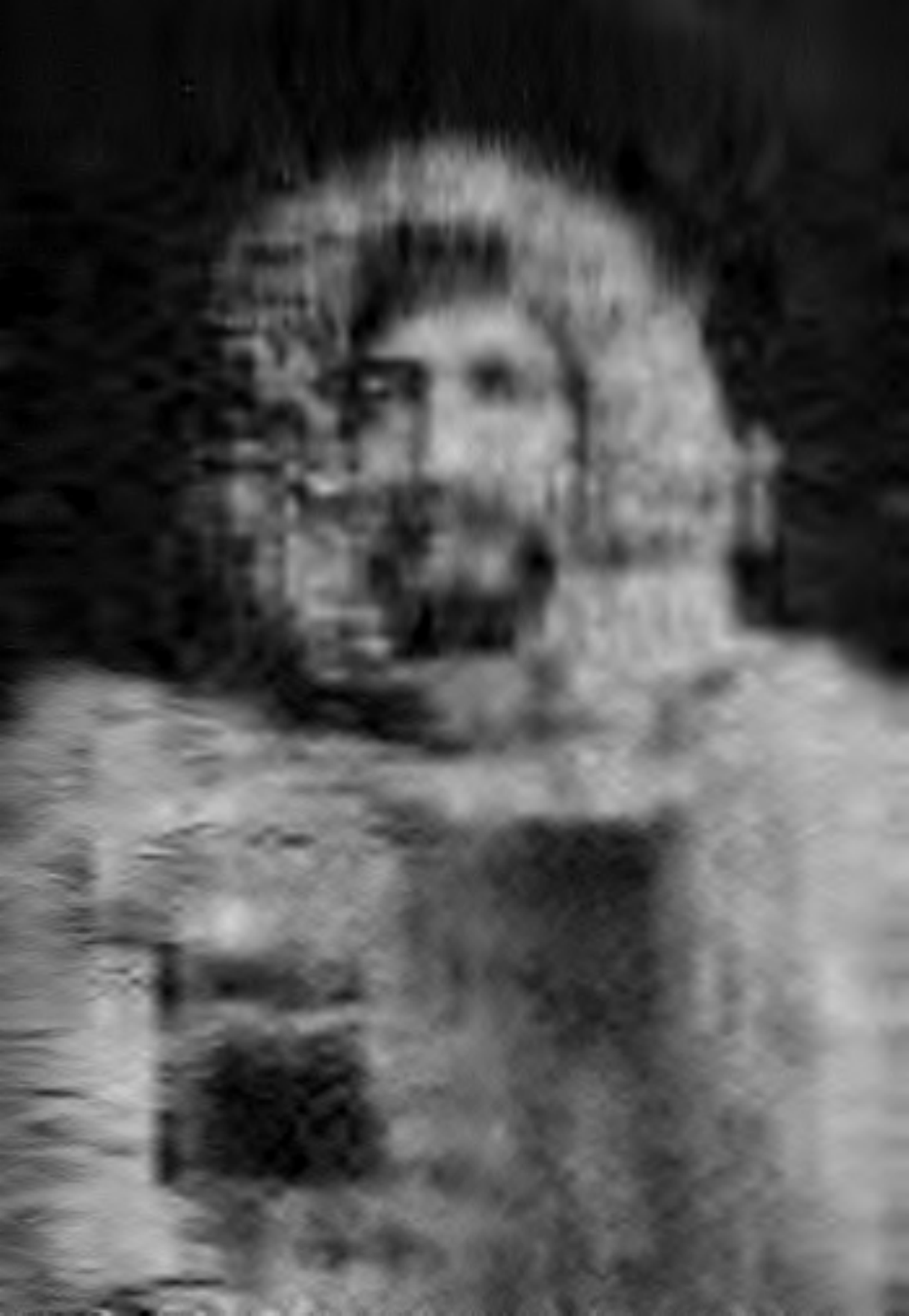}
\includegraphics[scale=0.211]{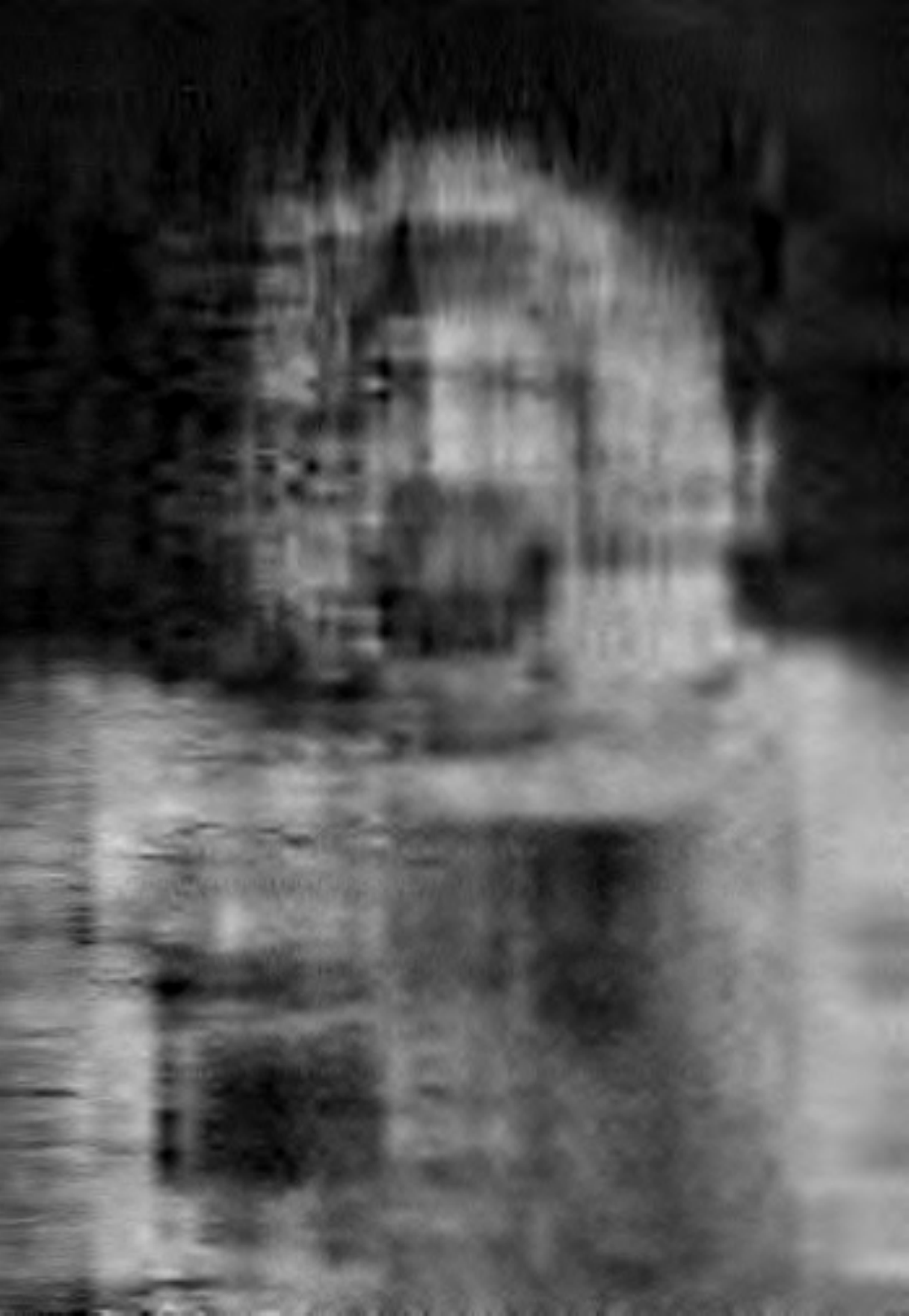}
\includegraphics[scale=0.211]{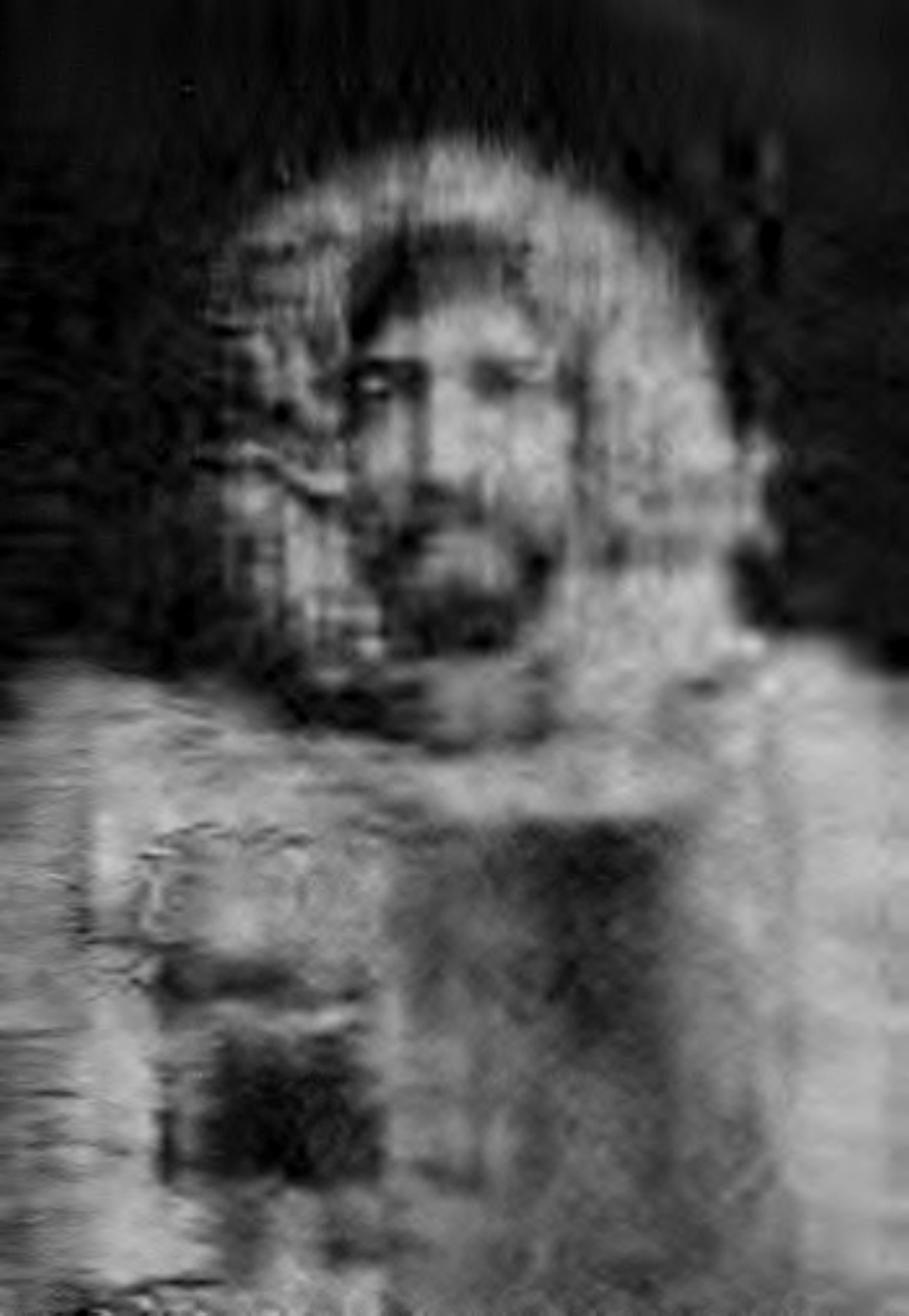}
\includegraphics[scale=0.33]{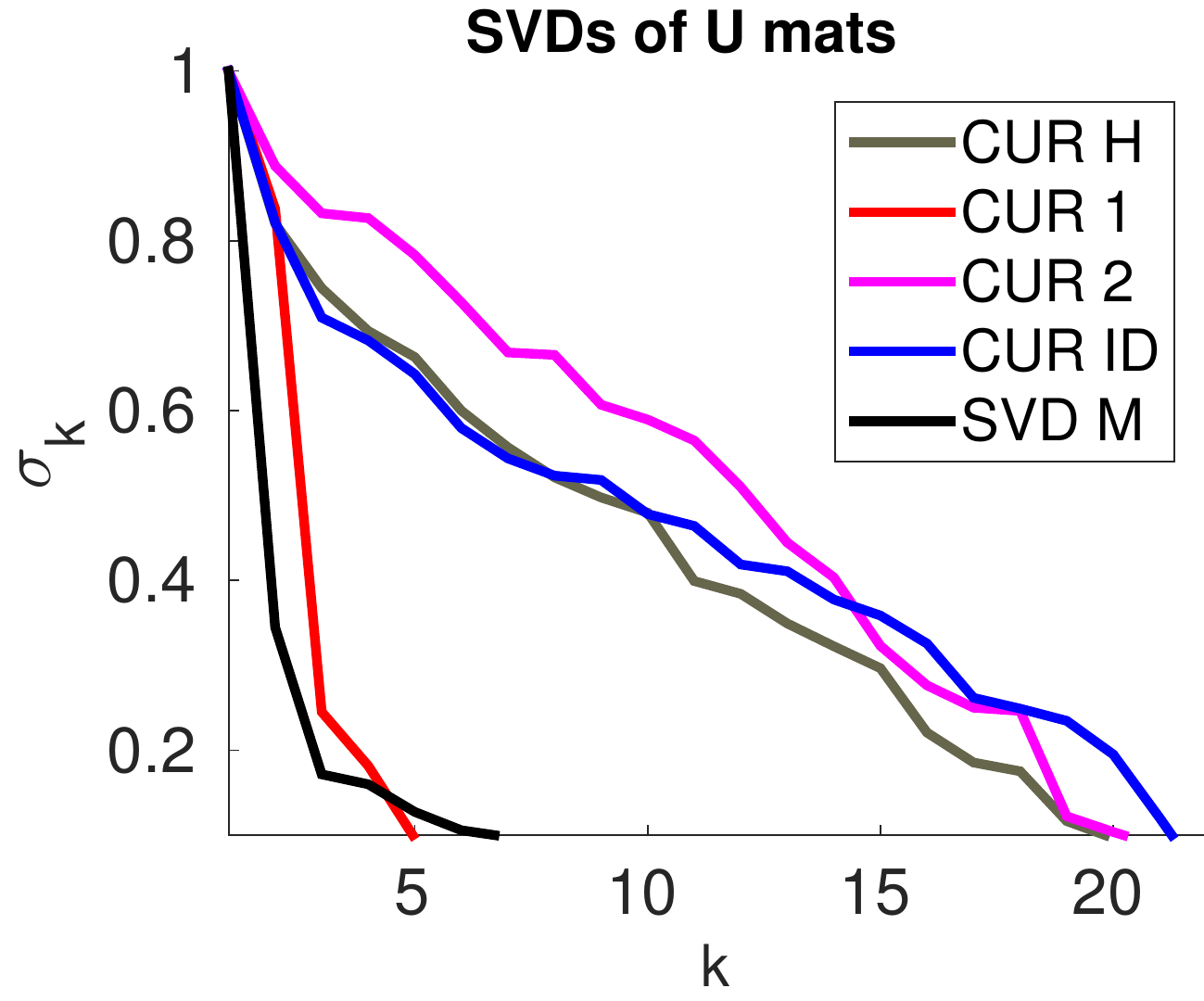}
}
\centerline{
\includegraphics[scale=0.2098]{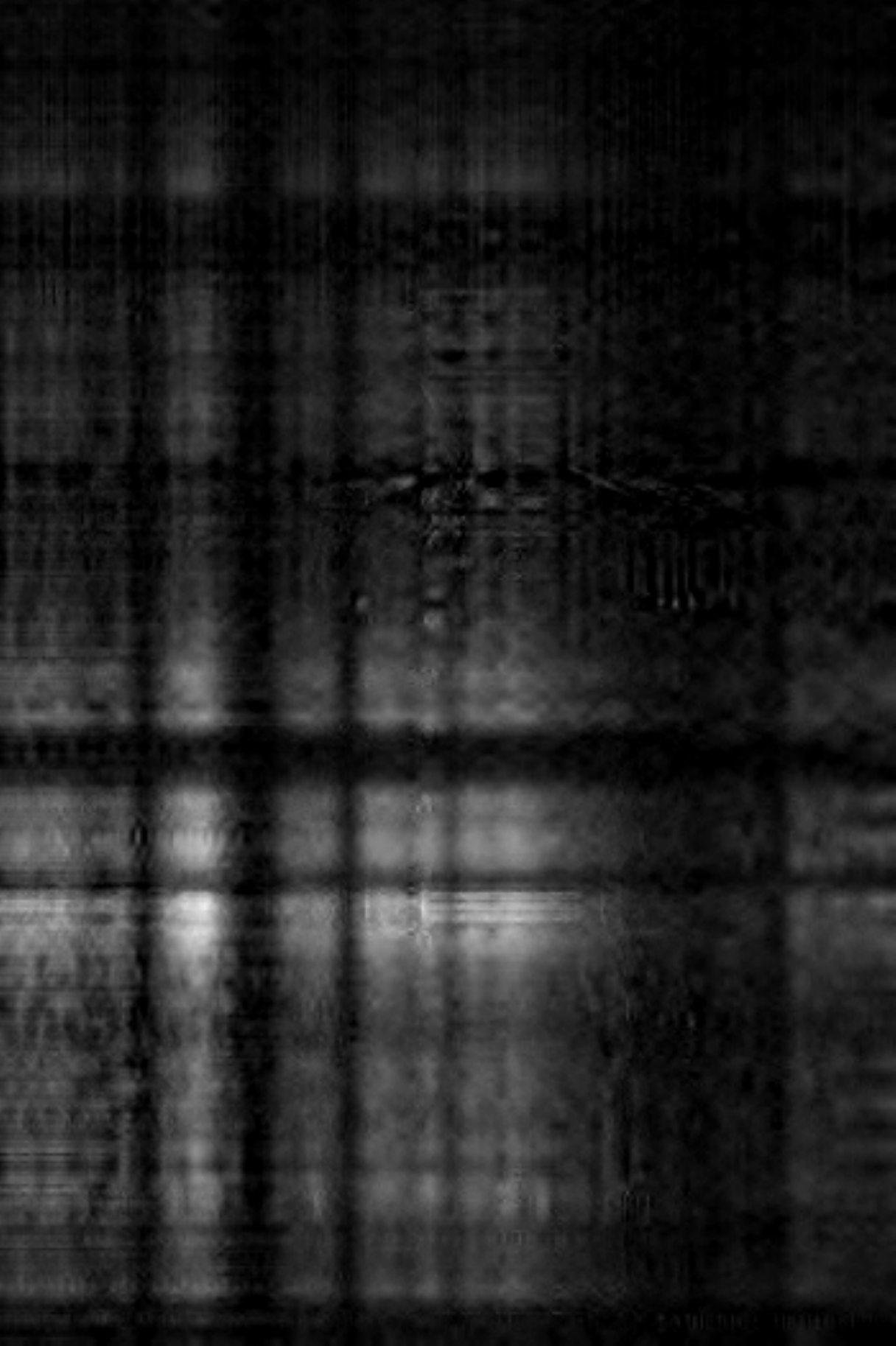}
\includegraphics[scale=0.2098]{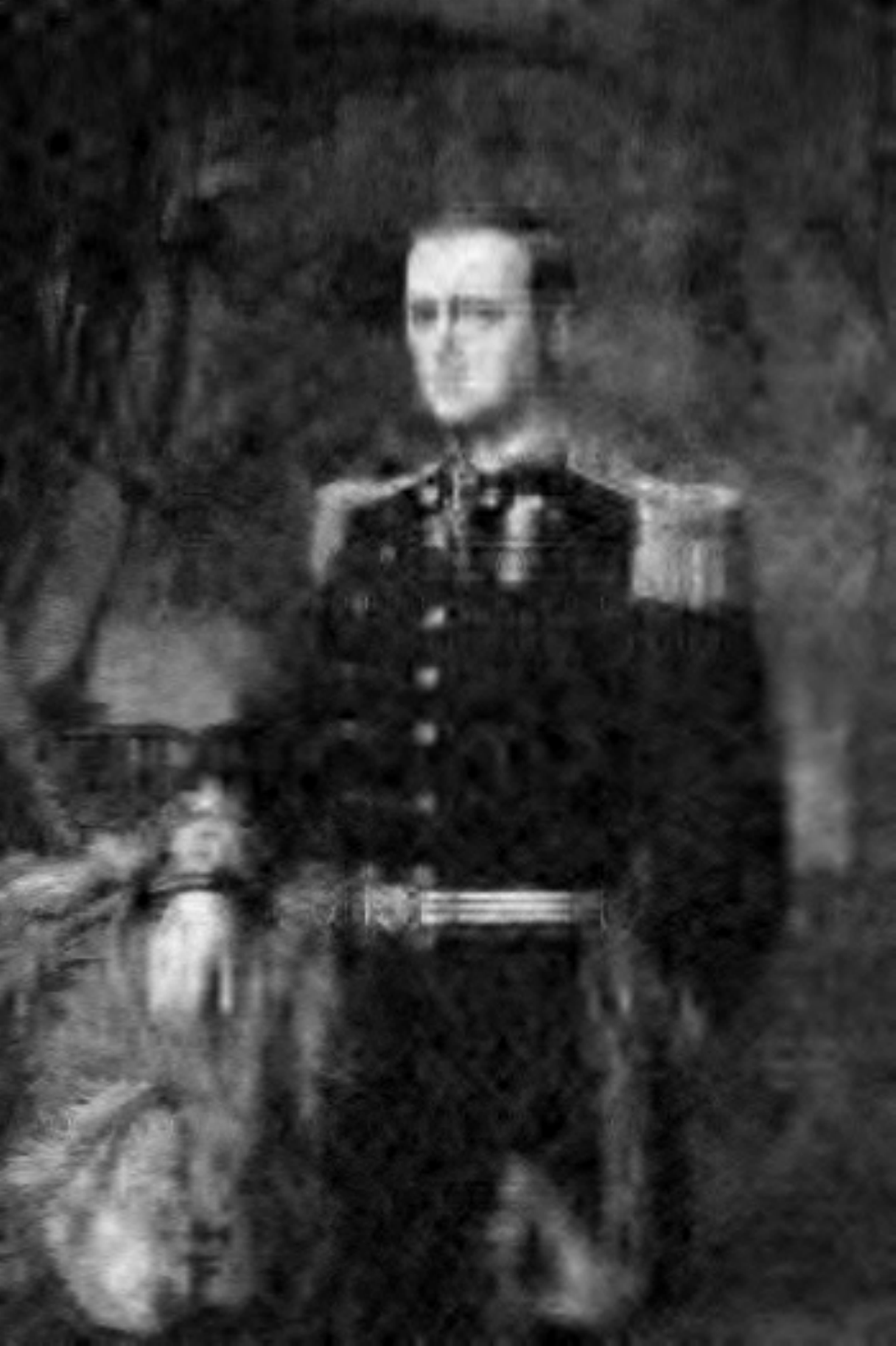}
\includegraphics[scale=0.2098]{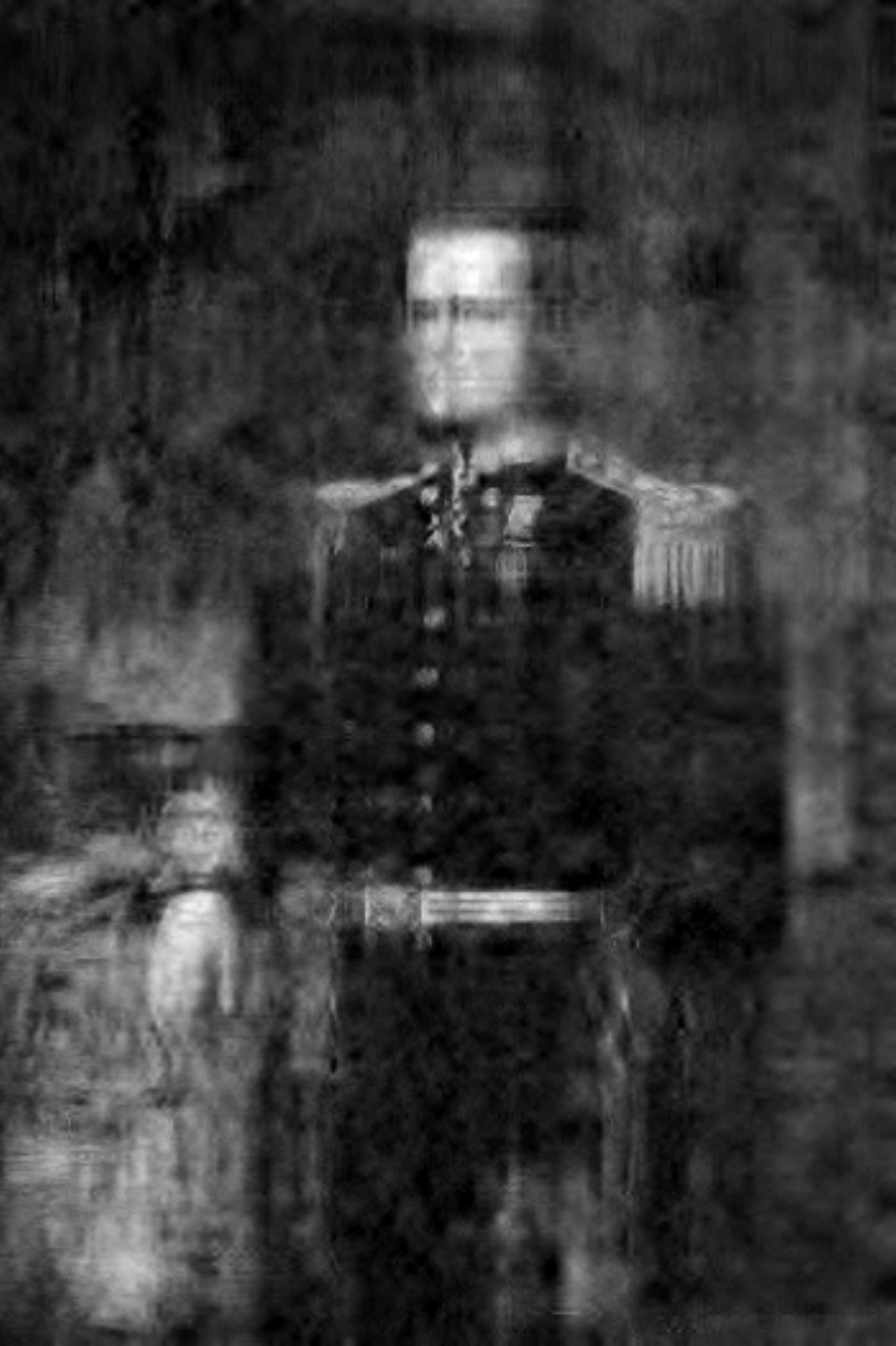}
\includegraphics[scale=0.2098]{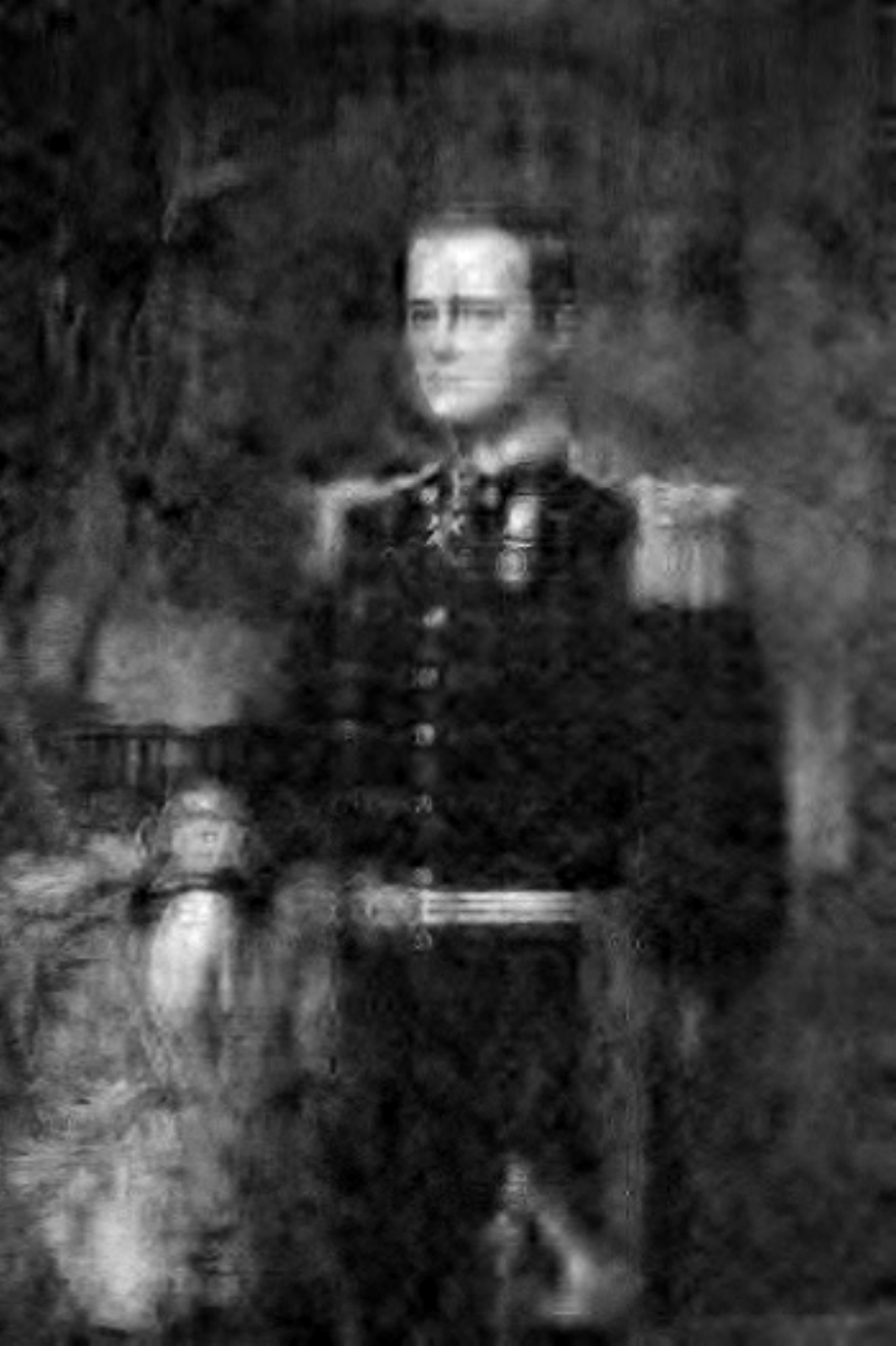}
\includegraphics[scale=0.33]{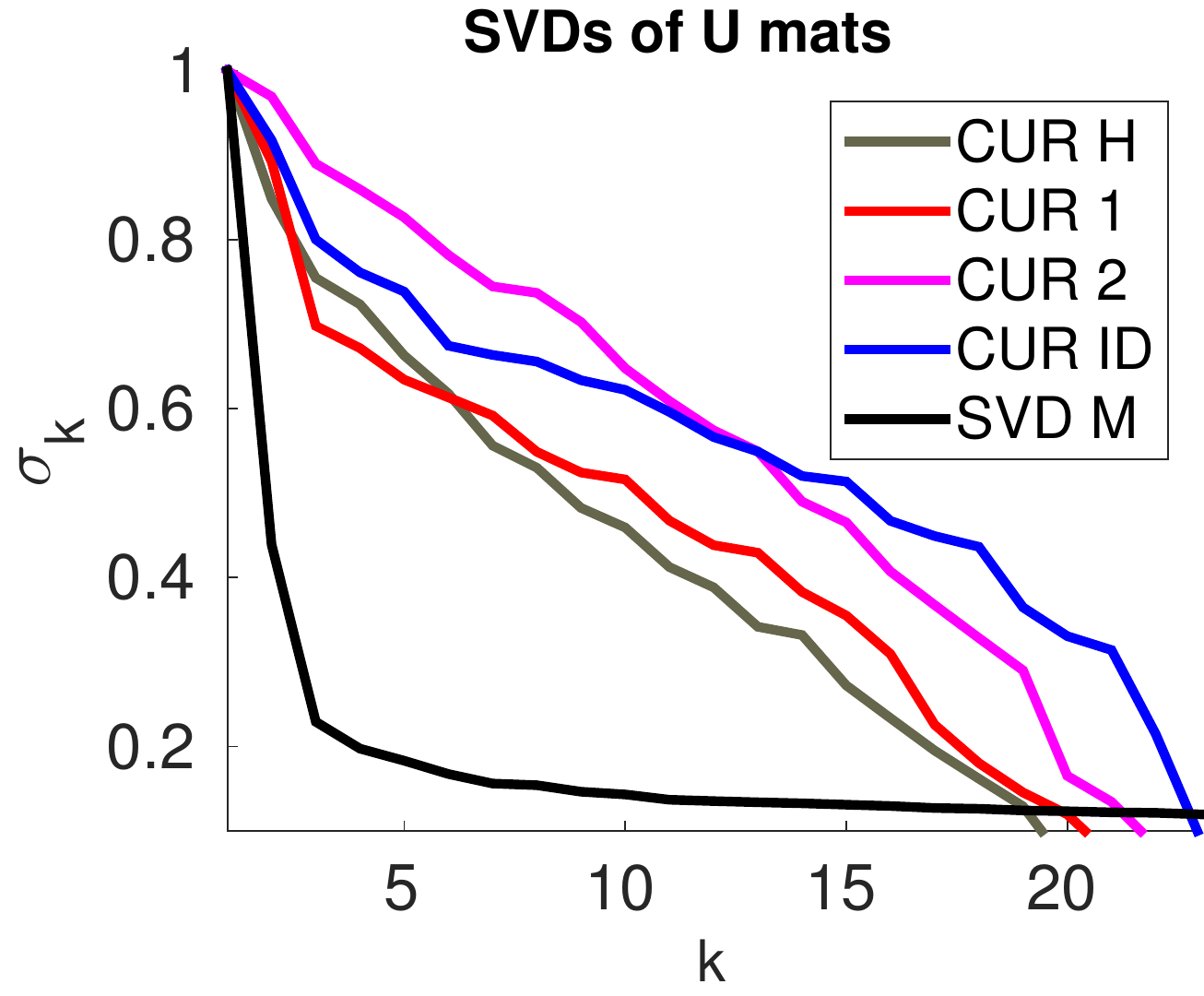}
}
\caption{Reconstructed images with \CUR~compression of the wavelet transformed 
image. Images resulting from applying Inverse Wavelet transform to matrix product 
$\mtx{CUR}$ obtained with
\CUR-1 in column 1, \CUR-H in column 2, \CUR-2 in column 3, and with \CURID~in column 4. 
Column 5 plots: singular value distributions of output $\mtx{U}$ matrices with the 
different algorithms compared.}
\label{fig:setV}
\end{figure*}

\section{Conclusions}

This paper presents efficient algorithms for computing \ID~and \CUR~decompositions.
The algorithms are obtained by very minor modifications to the classical pivoted 
\QR~factorization. As a result, the new \CURID~algorithm provides a direct and efficient 
way to  compute the \CUR~factorization using standard library functions, as provided in, 
e.g., BLAS and LAPACK. 

Numerical tests illustrate that the new algorithm \CURID~leads to substantially smaller 
approximation errors than methods that select the rows and columns based on leverage scores 
only. The accuracy of the new scheme is comparable to existing schemes that rely on additional
information in the leading singular vectors, such as, e.g., the DEIM-CUR \cite{2014arXiv1407.5516S}
of Sorensen and Embree, or the ``orthogonal top scores'' technique in the package rCUR.
However, we argue that \CURID~has a distinct advantage in that it can easily be coded up
using existing software packages, and our numerical experiments indicate an advantage in
terms of computational speed.

The paper also demonstrates that the two-sided \ID~is superior to the \CUR-decomposition
in terms of both approximation errors and conditioning of the factorization. The \ID~offers
the same benefits as the \CUR~decomposition in terms of data interpretation. However, for
very large and very sparse matrices, the \CUR~decomposition can be more memory efficient
than the \ID.

Finally, the paper demonstrates that randomization can be used to very substantially accelerate
algorithms for computing the \ID~and \CUR-decompositions, including techniques based on
leverage scores, the DEIM-CUR algorithm, and the newly proposed \CURID. Moreover, randomization
can be used to reduce the overall complexity of the \CURID-algorithm from $O(mnk)$ to $O(k^{2}m + k^{2}n + mn\log k)$.


\noindent
\textbf{Acknowledgement}
The research reported was supported by the Defense Advanced Projects Research
Agency under the contract N66001-13-1-4050, and by the National Science Foundation
under contracts 1320652 and 0748488.

\vspace{5.mm}
\bibliographystyle{plain}
\bibliography{ref}

\end{document}